\documentclass[11pt]{amsart}

\usepackage{epigamath}


\usepackage[english]{babel}


\numberwithin{equation}{section}


\usepackage[shortlabels]{enumitem}
\setlist[enumerate,1]{label={\rm(\arabic*)}, ref={\rm\arabic*}}

\usepackage{amssymb}
\usepackage{pdflscape}
\usepackage{amscd}
\usepackage{graphicx}

\usepackage{tikz, tikz-3dplot, pgfplots}
\usepackage{tkz-graph}
\usepackage{tikz-cd}
\usetikzlibrary{positioning, calc, arrows, decorations.markings, decorations.pathreplacing}
\tikzset{
  symbol/.style={
    draw=none,
    every to/.append style={
      edge node={node [sloped, allow upside down, auto=false]{$#1$}}}
  }
}
\tikzcdset{scale cd/.style={every label/.append style={scale=#1},
    cells={nodes={scale=#1}}}}


\newtheorem{theorem}{Theorem}[section]
\newtheorem{corollary}[theorem]{Corollary}
\newtheorem{lemma}[theorem]{Lemma}


\DeclareMathOperator{\Hom}{Hom}
\DeclareMathOperator{\GL}{GL}
\DeclareMathOperator{\Gr}{Gr}
\DeclareMathOperator{\Ker}{Ker}
\DeclareMathOperator{\Coker}{Coker}

\renewcommand{\Im}{\operatorname{Im}}


\EpigaVolumeYear{9}{2025} \EpigaArticleNr{19} \ReceivedOn{November 14, 2024}
\InFinalFormOn{March 17, 2025}
\AcceptedOn{March 28, 2025}

\title{Motives of central slope Kronecker moduli}
\titlemark{Motives of central slope Kronecker moduli}

\author{Alexandre Astruc}
\address{Institut de Recherche Math\'ematique Avanc\'ee,
UMR 7501 Universit\'e de Strasbourg et CNRS, 
7 rue Ren\'e-Descartes, 67000 Strasbourg, France}
\email{a.astruc@unistra.fr}
 
\author{Fr\'ed\'eric Chapoton}
\address{Institut de Recherche Math\'ematique Avanc\'ee,
UMR 7501 Universit\'e de Strasbourg et CNRS,
7 rue Ren\'e-Descartes, 67000 Strasbourg, France}
\email{chapoton@unistra.fr}

\author{Karen Martinez}
\address{Ruhr University Bochum, Faculty of Mathematics, Universit\"atsstra{\ss}e 150, 44780 Bochum, Germany}
\email{Karen.Martinez@ruhr-uni-bochum.de}

\author{Markus Reineke}
\address{Ruhr University Bochum, Faculty of Mathematics, Universit\"atsstra{\ss}e 150, 44780 Bochum, Germany}
\email{markus.reineke@rub.de}

\authormark{A.~Astruc, F.~Chapoton, K.~Martinez, and M.~Reineke}

\AbstractInEnglish{We use dualities of quiver moduli induced by reflection functors to describe generating series of motives of Kronecker moduli spaces of central slope as solutions of algebraic and $q$-difference equations.}

\MSCclass{14D20, 16G20}

\KeyWords{Kronecker moduli, quiver moduli, motivic invariants, algebraic generating series, Tamari lattice}


\acknowledgement{F.\,C.~has benefited from the support of the French ANR project Charms (ANR-19-CE40-0017). The work of M.\,R.~is supported by the DFG project CRC-TRR 191 ``Symplectic structures in geometry, algebra and dynamics'' (281071066).}

\begin{document}



\maketitle

\begin{prelims}

\DisplayAbstractInEnglish

\bigskip

\DisplayKeyWords

\medskip

\DisplayMSCclass

\end{prelims}


\newpage

\setcounter{tocdepth}{1}

\tableofcontents


\section{Introduction}

Kronecker moduli are geometric invariant theory (GIT) quotients parametrizing equivalence classes of tuples of linear maps up to change of basis.  Despite being interesting in themselves as moduli spaces for a hard linear algebra problem, they have found applications in the theory of vector bundles on projective planes, see \cite{Drezet}, or more generally as ambient spaces for moduli spaces of semistable sheaves on projective varieties, see~\cite{ACK}, and to the tropical vertex and Gromov--Witten invariants of toric surfaces, see \cite{GP,ReinekePoisson}. Kronecker moduli can be studied with techniques of quiver moduli. For example, in the case where they are smooth and projective, their Poincar\'e polynomials can be computed; see \cite{ReinekeHNS}. In the special case of central slope (when the numerical parameters defining the space differ only by one), a simple formula for the Euler characteristic is proven in \cite{Weist} using torus localization techniques.

The present paper starts from the second-named author's observation that this Euler characteristic coincides with a number of intervals in higher Tamari lattices, see \cite{BMJ}, which makes it desirable to understand the Betti numbers of central slope Kronecker moduli better, with the ultimate aim of relating them directly to Tamari interval combinatorics. A crucial step in this direction is achieved in the present work, by describing the generating function of motives of central slope Kronecker moduli by an algebraic $q$-difference equation (in the spirit of a general result, see \cite{Ma}, about algebraicity of generating series of Euler characteristics of framed Kronecker moduli). The derivation of the $q$-difference equation is made possible by utilizing various dualities of Kronecker moduli, mainly originating in the reflection functors of quiver representation theory. More precisely, our main result reads as follows.

\begin{theorem}
Consider the moduli spaces $K_{d,d}^{(m),{\rm fr}}$ parametrizing stable $m$-tuples of linear maps on a $d$-dimensional complex vector space $W$, together with a framing vector in $W$, up to the action of\, $\GL(W)\times\GL(W)$ $($see Section~{\rm\ref{sec3}}$)$. Denote by
$$F(t)=1+\sum_{d\geq 1}\left[K_{d,d}^{(m),{\rm fr}}\right]_{\rm vir}t^d\in\mathbb{Q}(v)[[t]]$$the generating series of their virtual motives, where $v$ denotes a square root of the Lefschetz motive $($see Section~{\rm\ref{momo}}$)$. Then the series $F(t)$ is determined by $F(0)=1$ and
$$F(t)=\prod_{i=1}^m\left(1-v^{2i-m-1}t\prod_{j=1}^{m-2}F\left(v^{2i-2j-2}t\right)\right)^{-1}.$$
\end{theorem}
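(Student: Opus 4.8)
The strategy is to first transport the problem about the framed spaces $K^{(m),{\rm fr}}_{d,d}$ into the representation theory of the Kronecker quiver $K_m$, and then to produce the functional equation from two ingredients: a Harder--Narasimhan/wall-crossing recursion for the framed moduli, which yields the outer product over $i=1,\dots,m$, and a chain of reflection-functor dualities, which yields the inner product over $j=1,\dots,m-2$. By the construction of Section~\ref{sec3}, $K^{(m),{\rm fr}}_{d,d}$ is a smooth projective quiver moduli space for the framed Kronecker quiver $0\to 1\rightrightarrows 2$ (with $m$ arrows $1\to 2$ and one-dimensional space at $0$), dimension vector $(1,d,d)$, and a suitable stability; in particular $[K^{(m),{\rm fr}}_{d,d}]_{\rm vir}$ is a Laurent polynomial in $v$, symmetric under $v\mapsto v^{-1}$, $\dim K^{(m),{\rm fr}}_{1,1}=m-1$, and $F(t)\in\mathbb{Q}(v)[[t]]$ is well defined with $F(0)=1$.

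\noindent\textbf{Step 1: the outer product over the arrows.} I would express $F(t)$ through the motivic generating function of the stacks of all $K_m$-representations (the motivic Hall algebra of $K_m$), using the standard identity relating stable framed moduli to the class of the whole category, the framing vector being cogenerating precisely away from the Harder--Narasimhan strata. Crossing the wall where the framing slope meets the central slope and exploiting the fact that a central-slope semistable $K_m$-representation is governed symmetrically by the $m$ arrows $a_1,\dots,a_m$, this rewrites $F(t)^{-1}$ as an ordered product $\prod_{i=1}^m\bigl(1-v^{2i-m-1}t\,G_i(t)\bigr)$ for suitable series $G_i$. The presence of exactly $m$ factors, and the shift $v^{2i-m-1}$, come out of the Euler form $\langle(d,d),(d,d)\rangle_{K_m}=(2-m)d^2$ together with the codimensions of the relevant loci; concretely they already match the $d=1$ term, since $K^{(m),{\rm fr}}_{1,1}\cong\mathbb{P}^{m-1}$ and $[\mathbb{P}^{m-1}]_{\rm vir}=\sum_{i=1}^m v^{2i-m-1}$, and the whole construction is manifestly invariant under the simultaneous symmetry $v\mapsto v^{-1}$, $i\mapsto m+1-i$.

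\noindent\textbf{Step 2: unfolding each factor by a reflection chain.} It remains to identify each residual series $G_i(t)$ with $\prod_{j=1}^{m-2}F(v^{2i-2j-2}t)$. The residual objects are again central-slope Kronecker-type data of smaller total dimension but not of the original shape; here I would apply the Bernstein--Gelfand--Ponomarev reflection functors of $K_m$. A single reflection leaves the central slope, and one needs a chain of $m-2$ reflections --- one for each unit of the defect $-\langle(1,1),(1,1)\rangle_{K_m}=m-2$ --- to return to a genuine framed central-slope problem, i.e.\ to $F$ again. Each reflection in the chain is an isomorphism of moduli spaces multiplying the virtual class by a controlled power of $v$ read off from the Euler form, which on generating series is exactly a substitution $t\mapsto v^{2(\cdot)}t$; telescoping the chain produces $\prod_{j=1}^{m-2}F(v^{2i-2j-2}t)$, whose exponents are symmetric about the $v^{2i-m-1}$ of Step~1 because the chain is symmetric under the representation-theoretic duality $M\mapsto M^{*}$.

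\noindent\textbf{Assembly and the main difficulty.} Combining Steps 1 and 2 gives the stated equation, and reading off the coefficient of $t^d$ yields a recursion determining $[K^{(m),{\rm fr}}_{d,d}]_{\rm vir}$ from the lower-degree terms, so the solution with $F(0)=1$ is unique; for $m=2$ the inner product is empty and one recovers $F(t)=\bigl((1-vt)(1-v^{-1}t)\bigr)^{-1}=\sum_{d\ge 0}[\mathbb{P}^d]_{\rm vir}t^d$, correct for the tame case. The serious obstacle is that for $m\ge 3$ the quiver $K_m$ is wild and its reflection functors are equivalences only on suitable subcategories: one must check that at every stage of the reflection chain the stable locus under consideration lies where the functor restricts to an honest isomorphism of moduli spaces, not merely a birational map, and that semistability is preserved with the expected parameters --- this is where I expect the technical heart of the argument. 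A parallel nuisance is the bookkeeping of the half-integer powers of $v$ through a composition of projective-bundle fibrations (from the framing) and reflection isomorphisms, since a single sign error in an Euler-form computation would spoil the precise exponents $2i-m-1$ and $2i-2j-2$; I would cross-check against the Euler characteristic formula of \cite{Weist}, against the algebraicity statement of \cite{Ma}, and against small values of $m$ and $d$.
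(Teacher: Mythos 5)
Your proposal correctly identifies the two essential tools (reflection-functor dualities of Kronecker moduli and the motivic formalism for framed moduli), and you rightly flag the preservation of semistability under reflection as the technical heart. But the two steps you build the proof on do not hold up, and they are not the route the paper takes. In Step 1 you assert a factorization $F(t)^{-1}=\prod_{i=1}^m\left(1-v^{2i-m-1}t\,G_i(t)\right)$ coming from a Harder--Narasimhan/wall-crossing recursion ``with one factor per arrow.'' The wall-crossing factorization is indexed by slopes of HN subquotients, not by arrows of the quiver, and there is no mechanism that produces exactly $m$ factors with those shifts; the only evidence you offer is the $d=1$ consistency check $[\mathbb{P}^{m-1}]_{\rm vir}=\sum_{i=1}^m v^{2i-m-1}$, which verifies the formula but does not derive it. In the paper the $m$-fold product is the \emph{end} point, not the starting point: one first derives the $v$-difference equation $\Delta F(t)=\nabla^{(m-1)}\left(t\prod_{i=1}^{m-1}F\left(v^{m-2i}t\right)\right)$ of Corollary~\ref{vdifference} from the coupled system of Theorem~\ref{main}, and then verifies \emph{a posteriori}, by a telescoping computation of $\Delta H(t)$, that the product $H(t)=\prod_{i=1}^m\left(1-v^{2i-m-1}t\prod_{j=1}^{m-2}F\left(v^{2i-2j-2}t\right)\right)^{-1}$ satisfies the same difference equation with $H(0)=1$, hence equals $F$. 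Without that verification there is no path from your Step~1 to the stated formula.

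Step 2 has a parallel problem: a BGP reflection changes the dimension vector (for the Kronecker quiver, $(d,e)\mapsto(d,md-e)$), so it cannot act on a one-variable generating series as a substitution $t\mapsto v^{c}t$, and the number of reflections in a ``chain'' is not what produces the $m-2$ inner factors. In the paper, the products of shifted copies of $F$ arise from telescoping the framed/unframed relation $F^{(k)}(t)=A^{(k)}(v^{k}t)/A^{(k)}(v^{-k}t)$ (Theorem~\ref{theoremframed}) against the reflection symmetry $A^{(k)}=A^{(m-k)}$ and the extra identities of Corollary~\ref{corident}; the single hard geometric input is the isomorphism $K_{d,kd}^{(m),{\rm fr}}\simeq K_{d,(m-k)d+1}^{(m),{\rm fr}}$ of Theorem~\ref{reflectionframed}, obtained from \emph{one} reflection at the sink followed by duality, with a delicate matching of the stability parameters on both sides. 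So while your instincts about where the difficulty lies are sound, the combinatorial skeleton you propose (arrows giving the outer product, a reflection chain giving the inner product) is not the structure that actually produces the exponents $2i-m-1$ and $2i-2j-2$, and I do not see how to complete the argument as you have set it up.
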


We recall all necessary notions of quiver representations, (framed) quiver moduli spaces, and their generating series of motivic invariants in Section~\ref{sec2}. Isomorphisms of quiver moduli spaces induced by reflection functors are constructed in Section~\ref{ref}. Specializing these to generalized Kronecker quivers, and thus to Kronecker moduli, in Section~\ref{sec3}, we obtain crucial dualities in Corollary~\ref{drk} and Theorem~\ref{reflectionframed}. These are used in Section~\ref{sec4} to establish a duality of generating series of motives, which is then specialized in Section~\ref{sec5} to the case of central slope, and related to Tamari intervals. Moreover, Section~\ref{sec5} discusses several other formulas for the motives and their generating series, as well as examples.

\subsection*{Acknowledgments}   The authors would like to thank the referees for suggesting several improvements of the exposition.

\section{Recollections on quiver moduli}\label{sec2}

\subsection{Basic quiver notation} For all basic notions of the representation theory of quivers, we refer to \cite{Schiffler}.  Let $Q$ be a finite acyclic quiver with set of vertices $Q_0$ and arrows written $\alpha\colon i\rightarrow j$.
Let $${\bf d}=\sum_{i\in Q_0}d_i{\bf i}\in\mathbb{N}Q_0$$ be a dimension vector for $Q$. We define the Euler form of $Q$ as the bilinear form on $\mathbb{Z}Q_0$ given by $$\langle{\bf d},{\bf e}\rangle=\sum_{i\in Q_0}d_ie_i-\sum_{\alpha\colon i\rightarrow j}d_ie_j,$$ and we denote its antisymmetrization by $$\{{\bf d},{\bf e}\}=\langle{\bf d},{\bf e}\rangle-\langle{\bf e},{\bf d}\rangle.$$ We fix complex vector spaces $V_i$ of dimension $d_i$ for all $i\in Q_0$. We define the variety of complex representations of $Q$ of dimension vector ${\bf d}$ as the complex affine space
$$R_{\bf d}(Q)=\bigoplus_{\alpha\colon i\rightarrow j}\Hom_\mathbb{C}(V_i,V_j),$$
on which the reductive complex algebraic group
$$G_{\bf d}=\prod_{i\in Q_0}\GL(V_i)$$
acts via change of basis
$$(g_i)_i\cdot(f_\alpha)_\alpha=(g_j\circ f_\alpha\circ g_i^{-1})_{\alpha\colon i\rightarrow j},$$
so that the $G_{\bf d}$-orbits in $R_{\bf d}(Q)$ correspond canonically to the isomorphism classes of complex representations of $Q$ of dimension vector ${\bf d}$. We can thus view a point $(f_\alpha)_\alpha\in R_{\bf d}(Q)$ as a representation $V=((V_i)_i,(f_\alpha)_\alpha)$ of $Q$ of dimension vector ${\bf d}$. 

\subsection{Moduli spaces of (semi-)stable representations}

We now summarize basic facts on moduli spaces; see \cite{King}. We fix a linear form $\Theta\in(\mathbb{Z}Q_0)^*$, called a stability. If $\Theta({\bf d})=0$, we define a point $(f_\alpha)_\alpha\in R_{\bf d}(Q)$, corresponding to a representation $V$,  to be $\Theta$-semistable if $$\Theta({\rm\bf dim}(U))\leq 0$$ for all subrepresentations $U\subset V$, that is, collections of subspaces $(U_i\subset V_i)_{i\in Q_0}$ such that $$f_\alpha(U_i)\subset U_j$$ for all arrows $\alpha\colon i\rightarrow j$ and  $${\rm\bf dim}(U)=\sum_i(\dim U_i){\bf i}.$$ Define $V$ to be $\Theta$-stable if $$\Theta({\rm\bf dim}(U))<0$$ for all non-zero proper subrepresentations.

More generally, fixing another linear form $\kappa\in(\mathbb{Z}Q_0)^*$ such that $\kappa({\bf d})>0$ for all $0\not={\bf d}\in\mathbb{N}Q_0$ (called a positive functional), we define the slope function $$\mu({\bf d})=\Theta({\bf d})/\kappa({\bf d})\in\mathbb{Q}$$ for $0\not={\bf d}\in\mathbb{N}Q_0$. Define $V$ as above to be $\mu$-semistable if $$\mu({\rm\bf dim}(U))\leq\mu({\rm\bf dim}(V))$$ for all non-zero subrepresentations $U\subset V$, and analogously for $\mu$-stability. If $\Theta({\rm\bf dim }(V))=0$, then $V$ is $\mu$-(semi-)stable if and only if it is $\Theta$-(semi-)stable. For later use, we note the following. 

\begin{lemma}\label{shiftstability}
The notion of $\mu$-$($semi-$)$stability does not change when $\Theta$ is replaced by
  $$\Theta'=a\Theta+b\kappa$$
  for $a,b\in\mathbb{Q}$ with $a>0$. In particular, the $\mu$-$($semi-$)$stable representations of a fixed dimension vector are precisely the $\Theta'$-$($semi-$)$stable ones for an appropriate $\Theta'$.
\end{lemma}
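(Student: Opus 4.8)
The plan is to observe that replacing $\Theta$ by $\Theta'=a\Theta+b\kappa$ transforms the slope function by an order-preserving affine map, so that every inequality entering the definition of (semi-)stability is preserved. Concretely, for any $0\neq\mathbf{d}\in\mathbb{N}Q_0$ the new slope is
$$\mu'(\mathbf{d})=\frac{\Theta'(\mathbf{d})}{\kappa(\mathbf{d})}=\frac{a\Theta(\mathbf{d})+b\kappa(\mathbf{d})}{\kappa(\mathbf{d})}=a\mu(\mathbf{d})+b,$$
where I use that $\kappa(\mathbf{d})>0$ because $\kappa$ is a positive functional. Since $a>0$, the map $x\mapsto ax+b$ is strictly increasing on $\mathbb{Q}$, so for all non-zero $\mathbf{d},\mathbf{e}$ one has $\mu(\mathbf{d})\leq\mu(\mathbf{e})$ if and only if $\mu'(\mathbf{d})\leq\mu'(\mathbf{e})$, and likewise for the strict inequalities.

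Next I would feed this equivalence into the definitions with $\mathbf{d}={\rm\bf dim}(U)$ and $\mathbf{e}={\rm\bf dim}(V)$. A representation $V$ is $\mu$-semistable exactly when $\mu({\rm\bf dim}(U))\leq\mu({\rm\bf dim}(V))$ for every non-zero $U\subset V$, and by the equivalence just established this holds if and only if $\mu'({\rm\bf dim}(U))\leq\mu'({\rm\bf dim}(V))$ for all such $U$, i.e.\ if and only if $V$ is $\mu'$-semistable; the stable case is identical, replacing $\leq$ by $<$ and ranging over proper non-zero subrepresentations. This yields the first assertion of the lemma.

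For the ``in particular'' part I would realize $\mu$-(semi-)stability of a fixed dimension vector $\mathbf{d}$ as genuine $\Theta'$-(semi-)stability. Choosing $a=\kappa(\mathbf{d})$ and $b=-\Theta(\mathbf{d})$ gives
$$\Theta'=\kappa(\mathbf{d})\Theta-\Theta(\mathbf{d})\kappa\in(\mathbb{Z}Q_0)^*,$$
which is integral and has $a=\kappa(\mathbf{d})>0$, and it satisfies $\Theta'(\mathbf{d})=\kappa(\mathbf{d})\Theta(\mathbf{d})-\Theta(\mathbf{d})\kappa(\mathbf{d})=0$. By the first part, $\mu'$-(semi-)stability coincides with $\mu$-(semi-)stability; and since $\Theta'({\rm\bf dim}(V))=\Theta'(\mathbf{d})=0$ for every $V$ of dimension vector $\mathbf{d}$, the remark recalled just before the lemma identifies $\mu'$-(semi-)stability with $\Theta'$-(semi-)stability. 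Composing the two identifications proves the claim.

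The argument is elementary and I do not anticipate a real obstacle. The only points requiring care are the strict positivity of $\kappa(\mathbf{d})$, which is needed both to divide and to keep the affine map increasing, and the integrality of $\Theta'$, which is why I clear denominators by taking $a=\kappa(\mathbf{d})$ rather than $a=1$.
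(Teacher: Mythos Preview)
Your argument is correct and is the standard elementary verification; the paper itself states this lemma without proof, so there is nothing to compare against beyond noting that your approach is exactly the one implicitly intended.
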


We denote by $R_{\bf d}^{\Theta\text{-}{\rm (s)st}}(Q)$ the (open, $G_{\bf d}$-stable) locus of $\Theta$-(semi-)stable points in $R_{\bf d}(Q)$, and similarly for~$R_{\bf d}^{\mu\text{-}{\rm (s)st}}(Q)$. We define the moduli space
$$M_{\bf d}^{\Theta\text{-}{\rm sst}}(Q)=R_{\bf d}^{\Theta\text{-}{\rm sst}}(Q)//G_{\bf d}$$
as the corresponding GIT quotient. The moduli space $M_{\bf d}^{\Theta\text{-}{\rm sst}}(Q)$, if non-empty, is an irreducible projective variety. It contains an open subset $M_{\bf d}^{\Theta\text{-}{\rm st}}(Q)$ which is the image of $R_{\bf d}^{\Theta\text{-}{\rm st}}(Q)$ under the quotient map
$$\pi\colon R_{\bf d}^{\Theta\text{-}{\rm sst}}(Q)\longrightarrow M_{\bf d}^{\Theta\text{-}{\rm sst}}(Q).$$
The restriction of $\pi$ to this open set is a principal bundle for the group
$$PG_{\bf d}=G_{\bf d}/\Delta,$$
where $\Delta$ is the diagonally embedded copy of $\mathbb{C}^*$ in $G_{\bf d}$. If non-empty, $M_{\bf d}^{\Theta\text{-}{\rm st}}(Q)$ is thus smooth and irreducible of dimension $1-\langle{\bf d},{\bf d}\rangle$. If the dimension vector ${\bf d}$ is $\Theta$-coprime, that is, $\Theta({\bf e})\not=0$ for all proper ${\bf e}\leq{\bf d}$ (componentwise inequality), the stable and semistable loci coincide, so $M_{\bf d}^{\Theta\text{-}{\rm sst}}(Q)=M_{\bf d}^{\Theta\text{-}{\rm st}}(Q)$ is smooth and projective. In this case, ${\bf d}$ is indivisible; that is, it is not a proper multiple of another dimension vector, and thus there exist universal bundles $\mathcal{V}_i$ of rank $d_i$ on $M_{\bf d}^{\Theta\text{-}{\rm sst}}(Q)$.

Changing the orientation of all arrows in the quiver $Q$ yields the opposite quiver $Q^{\rm op}$. Dualizing all vector spaces and linear maps in a quiver representation associates to $V$ a representation $V^*$ of $Q^{\rm op}$. On the geometric level, fixing ($\mathbb{C}$-bilinear) non-degenerate symmetric bilinear forms on the vector spaces $V_i$, we have a map $R_{\bf d}(Q)\rightarrow R_{\bf d}(Q^{\rm op})$ associating to $(f_\alpha)_\alpha$ the tuple of adjoint maps $(f_\alpha^*\colon V_j\rightarrow V_i)_{\alpha\colon i\rightarrow j})$, which is compatible with the $G_{\bf d}$-actions in the sense that, written symbolically, $$(g\cdot f)^*=\varphi(g)\cdot f^*$$ for the automorphism $\varphi$ of $G_{\bf d}$ which is the adjoint inverse in every component. We have thus proved the following. 

\begin{lemma}\label{duality}
There exists a natural isomorphism $$M_{\bf d}^{\Theta\text{-}{\rm sst}}(Q)\simeq M_{\bf d}^{(-\Theta)\text{-}{\rm sst}}(Q^{\rm op}).$$
\end{lemma}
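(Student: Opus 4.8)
The plan is to promote the linear-algebraic dualization map sketched above to an isomorphism of GIT quotients; essentially all the ingredients are in place, and the two points requiring genuine care are the behaviour of $\Theta$-semistability under dualization and the descent of a twisted-equivariant isomorphism to the quotients.

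First I would pin down the map $D\colon R_{\bf d}(Q)\to R_{\bf d}(Q^{\rm op})$ sending $(f_\alpha)_\alpha$ to the tuple of adjoints $(f_\alpha^*\colon V_j\to V_i)_{\alpha\colon i\to j}$ with respect to the chosen non-degenerate symmetric bilinear forms on the $V_i$. This is a linear isomorphism of affine spaces, and the identity $D(g\cdot f)=\varphi(g)\cdot D(f)$, for the automorphism $\varphi$ of $G_{\bf d}$ given in each component by the adjoint inverse, shows that $D$ is $\varphi$-equivariant. In particular $D$ descends to a bijection between $G_{\bf d}$-orbits, realizing on isomorphism classes the passage $V\mapsto V^*$ from representations of $Q$ to representations of $Q^{\rm op}$.

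The crux is to check that $D$ carries the $\Theta$-semistable locus onto the $(-\Theta)$-semistable locus; here $\Theta({\bf d})=0$. Given a representation $V$ of $Q$, sending a subrepresentation $U\subseteq V$ to the collection $U^\perp\subseteq V^*$ of orthogonal complements (equivalently, annihilators) gives an inclusion-reversing bijection between subrepresentations of the $Q$-representation $V$ and subrepresentations of the $Q^{\rm op}$-representation $V^*$, with ${\rm\bf dim}(U^\perp)={\bf d}-{\rm\bf dim}(U)$; the one computation needed is that $f_\alpha^*$ sends $U_j^\perp$ into $U_i^\perp$ whenever $f_\alpha$ sends $U_i$ into $U_j$. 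Since $\Theta({\bf d})=0$, the inequality $\Theta({\rm\bf dim}(U))\le 0$ for all subrepresentations $U\subseteq V$ is then equivalent to $(-\Theta)({\rm\bf dim}(U^\perp))\le 0$ for all subrepresentations $U^\perp\subseteq V^*$, i.e. to $(-\Theta)$-semistability of $V^*$; the strict inequalities translate verbatim, so the stable loci correspond as well. Hence $D$ restricts to a $\varphi$-equivariant isomorphism $R_{\bf d}^{\Theta\text{-}{\rm sst}}(Q)\xrightarrow{\sim}R_{\bf d}^{(-\Theta)\text{-}{\rm sst}}(Q^{\rm op})$.

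Finally I would descend to GIT quotients. The character of $G_{\bf d}$ defining the $\Theta$-linearization is built from the determinants of the factors, and since $\varphi$ inverts each of these determinants, precomposition with $\varphi$ matches the characters defining the $\Theta$- and $(-\Theta)$-linearizations; consequently pulling back functions along $D$ identifies the ring of $(-\Theta)$-semi-invariants on $R_{\bf d}(Q^{\rm op})$ with the ring of $\Theta$-semi-invariants on $R_{\bf d}(Q)$, and passing to Proj gives the desired isomorphism $M_{\bf d}^{\Theta\text{-}{\rm sst}}(Q)\simeq M_{\bf d}^{(-\Theta)\text{-}{\rm sst}}(Q^{\rm op})$. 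It is canonical (hence ``natural'') because a different choice of bilinear forms changes $D$ only by an element of $G_{\bf d}$, so not at all on the quotient. I expect the main obstacle to be purely technical bookkeeping: checking that the orthogonal-complement construction genuinely interchanges $Q$- and $Q^{\rm op}$-subrepresentations and respects dimension vectors, and that twisting the action by $\varphi$ leaves the GIT quotient unchanged.
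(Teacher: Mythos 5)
Your proposal is correct and follows essentially the same route as the paper, whose ``proof'' is just the paragraph preceding the lemma: the adjoint map $R_{\bf d}(Q)\to R_{\bf d}(Q^{\rm op})$ together with its $\varphi$-twisted equivariance, from which the isomorphism of quotients is declared immediate. You merely make explicit the details the paper leaves implicit (the annihilator bijection on subrepresentations, the matching of characters under $\varphi$, and independence of the choice of bilinear forms), all of which check out.
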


\subsection{Motives of moduli spaces}\label{momo}
We consider the Grothendieck ring of varieties $K_0({\rm Var}_\mathbb{C})$ and denote by $[X]$ the class of a variety; in particular, $\mathbb{L}=[\mathbb{A}^1]$ denotes the Lefschetz motive, that is, the class of the affine line. We will work in the localization
$$R=K_0({\rm Var}_\mathbb{C})\left[\mathbb{L}^{\pm 1/2},\left(1-\mathbb{L}^i\right)^{-1},\, i\geq 1\right]$$ (which is essentially the Grothendieck ring of stacks with affine stabilizers, see \cite{Bridge}). For an irreducible variety $X$, we define its virtual motive
$$
[X]_{\rm vir}=\left(-\mathbb{L}^{1/2}\right)^{-\dim X}\cdot[X]\in R
$$
(in the present context, working with virtual motives has the advantage of making several formulas more symmetric).  It will turn out that all our calculations will already take place in the subring of $R$ generated by the rational functions in $\mathbb{L}^{1/2}$. Define the so-called motivic quantum affine space of $Q$ (see, for example, \cite{Moz}) as the formal power series ring $R[[x_i\, |\, i\in Q_0]]$, with multiplication twisted by the anti-symmetrized Euler form of $Q$:
$$x^{\bf d}\cdot x^{\bf e}=\left(-\mathbb{L}^{1/2}\right)^{\{{\bf d},{\bf e}\}}x^{{\bf d}+{\bf e}}.$$
We define the motivic generating series
$$A(x)=\sum_{{\bf d}\in\mathbb{N}Q_0}\frac
{\left[R_{\bf d}(Q)\right]_{\rm vir}}
{\left[G_{\bf d}\right]_{\rm vir}}
x^{\bf d}=\sum_{{\bf d}\in\mathbb{N}Q_0}\frac{\left(-\mathbb{L}^{1/2}\right)^{-\langle{\bf d},{\bf d}\rangle}x^{\bf d}}{\prod_{i\in Q_0}\left(\left(1-\mathbb{L}^{-1}\right)\cdot\ldots\cdot\left(1-\mathbb{L}^{-d_i}\right)\right)}$$ and, for all slopes $s\in\mathbb{Q}$,
$$A_s^\mu(x)=1+\sum_{{\bf d}\, :\,\mu({\bf d})=s}\frac{\left[R_{\bf d}^{\mu\text{-}{\rm sst}}(Q)\right]_{\rm vir}}{\left[G_{\bf d}\right]_{\rm vir}}x^{\bf d}.$$
We then have the following wall-crossing formula (which proves in particular that all motives of semistable loci are rational functions in $\mathbb{L}^{1/2}$). 

\begin{theorem}
In the motivic quantum affine space, we have
$$A(x)=\prod_{s\in\mathbb{Q}}^{\rightarrow}A_s^\mu(x),$$
where the product is taken in ascending order.
\end{theorem}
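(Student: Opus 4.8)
The wall-crossing formula $A(x)=\prod_{s\in\mathbb{Q}}^{\rightarrow}A_s^\mu(x)$ is the motivic (Hall-algebra style) version of the Harder--Narasimhan recursion. The strategy I would follow is to translate the classical fact that every representation has a unique Harder--Narasimhan filtration into an identity in the motivic quantum affine space, by stratifying the representation variety $R_{\bf d}(Q)$ according to HN type and then carefully accounting for the motives of the strata, the group motives, and the quadratic twist coming from the anti-symmetrized Euler form.

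\emph{Step 1: Stratify by HN type.} Fix a dimension vector ${\bf d}$. For a representation $V$ of dimension vector ${\bf d}$, its HN filtration $0=V^0\subset V^1\subset\cdots\subset V^k=V$ has subquotients that are $\mu$-semistable of strictly decreasing slopes $s_1>s_2>\cdots>s_k$; record the \emph{HN type} ${\bf d}^* = ({\bf d}^1,\dots,{\bf d}^k)$ with ${\bf d}^\ell={\rm\bf dim}(V^\ell/V^{\ell-1})$, $\sum_\ell{\bf d}^\ell={\bf d}$, and $\mu({\bf d}^1)>\cdots>\mu({\bf d}^k)$. This gives a finite decomposition $R_{\bf d}(Q)=\bigsqcup_{{\bf d}^*} R_{{\bf d}^*}^{\rm HN}(Q)$ into locally closed $G_{\bf d}$-stable pieces, so $[R_{\bf d}(Q)]=\sum_{{\bf d}^*}[R_{{\bf d}^*}^{\rm HN}(Q)]$ in $K_0({\rm Var}_\mathbb{C})$.

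\emph{Step 2: Compute the motive of each HN stratum.} The stratum $R_{{\bf d}^*}^{\rm HN}(Q)$ fibers $G_{\bf d}$-equivariantly over the flag-type data: choosing the subspaces $V^1\subset\cdots\subset V^{k-1}\subset V$ of the prescribed dimensions is a product of partial flag varieties with motive $[G_{\bf d}]/\bigl[\prod_\ell G_{{\bf d}^\ell}\cdot (\text{unipotent part})\bigr]$, and over each such flag the fiber consists of representations whose associated graded is $\mu$-semistable in each slope. Unwinding this, and using that $\Hom$ and $\mathrm{Ext}$ spaces between representations of different slopes are controlled by the Euler form, one gets
$$\frac{[R_{{\bf d}^*}^{\rm HN}(Q)]_{\rm vir}}{[G_{\bf d}]_{\rm vir}} = \left(-\mathbb{L}^{1/2}\right)^{-\sum_{\ell<m}\langle{\bf d}^\ell,{\bf d}^m\rangle}\prod_{\ell=1}^k\frac{[R_{{\bf d}^\ell}^{\mu\text{-}{\rm sst}}(Q)]_{\rm vir}}{[G_{{\bf d}^\ell}]_{\rm vir}},$$
where the scalar prefactor is exactly the contribution of the off-diagonal $\Hom$ and $\mathrm{Ext}$ terms (the extension spaces contribute the affine-space fibers beyond the semistable pieces, the $\Hom$ spaces contribute via the stabilizer/group cancellation), and after passing to virtual motives the exponent simplifies to the \emph{Euler form}, not its antisymmetrization.

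\emph{Step 3: Match with the twisted product.} Now I sum over all HN types. Since $\langle{\bf d}^\ell,{\bf d}^\ell\rangle$ already sits inside each factor $[R_{{\bf d}^\ell}(Q)]_{\rm vir}/[G_{{\bf d}^\ell}]_{\rm vir}$ is not quite right --- rather, one observes that
$$\left(-\mathbb{L}^{1/2}\right)^{-\sum_{\ell<m}\langle{\bf d}^\ell,{\bf d}^m\rangle}
\quad\text{versus}\quad
\left(-\mathbb{L}^{1/2}\right)^{\sum_{\ell<m}\{{\bf d}^\ell,{\bf d}^m\}}$$
differ precisely by the symmetric part, and the bookkeeping is arranged so that the ordered product $x^{{\bf d}^1}\cdot x^{{\bf d}^2}\cdots x^{{\bf d}^k}$ in the motivic quantum affine space, which carries the twist $\left(-\mathbb{L}^{1/2}\right)^{\sum_{\ell<m}\{{\bf d}^\ell,{\bf d}^m\}}$, absorbs exactly the antisymmetric half, while the symmetric half is absorbed into the normalization $[R_{\bf d}(Q)]_{\rm vir}=\left(-\mathbb{L}^{1/2}\right)^{-\langle{\bf d},{\bf d}\rangle}[R_{\bf d}(Q)]/(-\mathbb{L}^{1/2})^{\cdots}$ together with the identity $\langle{\bf d},{\bf d}\rangle=\sum_\ell\langle{\bf d}^\ell,{\bf d}^\ell\rangle+\sum_{\ell<m}(\langle{\bf d}^\ell,{\bf d}^m\rangle+\langle{\bf d}^m,{\bf d}^\ell\rangle)$. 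Grouping HN types by first fixing the partition of the set of occurring slopes, the sum over HN types factors as the ordered product over $s\in\mathbb{Q}$ of the individual $A_s^\mu(x)$, giving $A(x)=\prod_{s}^{\rightarrow}A_s^\mu(x)$. Finally, since each $A_s^\mu(x)$ is obtained from $A(x)$ and its "lower-slope" truncations by inverting power series with leading term $1$, each $[R_{\bf d}^{\mu\text{-}{\rm sst}}(Q)]_{\rm vir}$ is a $\mathbb{Z}[\mathbb{L}^{\pm1/2}]$-linear combination of products of the $[R_{\bf e}(Q)]_{\rm vir}/[G_{\bf e}]_{\rm vir}$, hence a rational function in $\mathbb{L}^{1/2}$.

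\emph{Main obstacle.} The delicate point is Step 2: correctly identifying the motive of the HN stratum, i.e.\ checking that the relative position data (partial flags) together with the fibers of "graded-semistable" representations assemble into a product over slopes with \emph{exactly} the quadratic Euler-form prefactor and no leftover $\mathbb{L}$-powers from non-reduced stabilizers. This is the standard but somewhat intricate ``Hall-algebra over a field / motivic'' computation: one must use that for $U$ of slope $>$ that of $U'$ there are no nonzero maps $U'\to U$ (so that the automorphism group of the filtered object is a semidirect product of $\prod G_{{\bf d}^\ell}$ with a unipotent radical of dimension $\sum_{\ell<m}\dim\Hom(V^\ell,V^m)$ --- wait, of the \emph{non-split} homs) and dually that $\mathrm{Ext}^1$ contributes the affine fibers, invoking $\dim\mathrm{Ext}^1({\bf d}^m,{\bf d}^\ell)-\dim\Hom({\bf d}^m,{\bf d}^\ell)=-\langle{\bf d}^m,{\bf d}^\ell\rangle$ for acyclic $Q$. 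Once this local computation is pinned down, summing over HN types and re-indexing by slopes is formal.
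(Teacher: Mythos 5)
Your route --- stratifying $R_{\bf d}(Q)$ by Harder--Narasimhan type and matching the stratum motives against the twisted ordered product --- is exactly the argument the paper invokes: its proof is a two-line citation of the HN recursion of \cite[Definition 4.1(2), Lemma 4.3]{ReinekePoisson} and of its validity on the motivic level \cite[proof of Theorem 3.5]{RSW}. So your architecture is the intended one, and your ``main obstacle'' paragraph correctly isolates where the work lies (local closedness of the strata, vanishing of homomorphisms from higher to lower slope, and $\dim\mathrm{Ext}^1-\dim\Hom=-\langle\cdot,\cdot\rangle$ for acyclic $Q$).

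However, the one formula you actually display, in Step 2, is wrong, and Step 3 then contradicts it rather than repairs it. The untwisted recursion is $[R^{\rm HN}_{{\bf d}^*}(Q)]/[G_{\bf d}]=\mathbb{L}^{-\sum_{\ell<\ell'}\langle{\bf d}^\ell,{\bf d}^{\ell'}\rangle}\prod_\ell[R^{\mu\text{-}{\rm sst}}_{{\bf d}^\ell}(Q)]/[G_{{\bf d}^\ell}]$; after normalizing by $(-\mathbb{L}^{1/2})^{-\dim R_{\bf d}(Q)}$ and $(-\mathbb{L}^{1/2})^{-\dim G_{\bf d}}$ (ambient normalization, so that the strata still sum to $[R_{\bf d}(Q)]_{\rm vir}$), the identity $\langle{\bf d},{\bf d}\rangle=\sum_\ell\langle{\bf d}^\ell,{\bf d}^\ell\rangle+\sum_{\ell<\ell'}(\langle{\bf d}^\ell,{\bf d}^{\ell'}\rangle+\langle{\bf d}^{\ell'},{\bf d}^\ell\rangle)$ cancels exactly the symmetric part, leaving
$$\frac{\bigl[R^{\rm HN}_{{\bf d}^*}(Q)\bigr]_{\rm vir}}{[G_{\bf d}]_{\rm vir}}=\left(-\mathbb{L}^{1/2}\right)^{-\sum_{\ell<\ell'}\{{\bf d}^\ell,{\bf d}^{\ell'}\}}\prod_{\ell}\frac{\bigl[R^{\mu\text{-}{\rm sst}}_{{\bf d}^\ell}(Q)\bigr]_{\rm vir}}{\bigl[G_{{\bf d}^\ell}\bigr]_{\rm vir}},$$
that is, the virtual exponent is precisely the \emph{antisymmetrization} --- the opposite of your assertion that ``after passing to virtual motives the exponent simplifies to the Euler form, not its antisymmetrization.'' You also never perform the order/sign check: the HN type lists slopes in \emph{decreasing} order $\mu({\bf d}^1)>\cdots>\mu({\bf d}^k)$, while the product is taken in ascending order, so the monomial to compare with is $x^{{\bf d}^k}\cdots x^{{\bf d}^1}=\left(-\mathbb{L}^{1/2}\right)^{-\sum_{\ell<\ell'}\{{\bf d}^\ell,{\bf d}^{\ell'}\}}x^{\bf d}$. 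The sign does work out, but in a $q$-twisted identity this matching of exponent and ordering is the entire content beyond the classical uniqueness of the HN filtration, and as written your Steps 2--3 do not carry it out correctly.
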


\begin{proof}
In \cite[Lemma 4.3]{ReinekePoisson} it is explained how such a factorization identity follows from a (Harder--Narasimhan-type) recursive formula given in \cite[Definition 4.1(2)]{ReinekePoisson}. That this recursion holds on the motivic level is explained in the proof of \cite[Theorem 3.5]{RSW}.
\end{proof}

\subsection{Framed moduli spaces}\label{subsec:framed}

The reference for the material of this section is \cite{EngelReineke}, with slight (but crucial to the following) adaptions of stability parameters which will be explained later. Given $(Q,{\bf d},\Theta)$ as before and another dimension vector $0\not={\bf n}\in\mathbb{N}Q_0$, we define a new datum $(\widehat{Q},\widehat{{\bf d}},\widehat{\Theta})$ as follows:
\begin{itemize}
\item The vertices of  $\widehat{Q}$ are those of $Q$, together with an additional vertex $0$.
\item The arrows of $\widehat{Q}$ are those of $Q$, together with $n_i$ arrows from $0$ to $i$, for all $i\in Q_0$.
\item We have $\widehat{d}_i=d_i$ for all $i\in Q_0$ and $\widehat{d}_0=1$.
\item We choose a positive functional $\kappa\in(\mathbb{Z}Q_0)^*$ and a positive integer $C$ such that $\kappa({\bf d})<C\cdot\gcd(\Theta)$ and define $\widehat{\Theta}_i=C\Theta_i-\kappa_i$ for all $i\in Q_0$ and $\widehat{\Theta}_0=\kappa({\bf d})$.
\end{itemize}

Representations of $\widehat{Q}$ of dimension vector $\widehat{\bf d}$ can be identified with pairs $(V,f)$ consisting of a representation $V$ of $Q$ of dimension vector ${\bf d}$, together with a tuple
$$f=\left(f_i\colon\mathbb{C}^{n_i}\rightarrow V_i\right).$$
With this identification, the following holds. 

\begin{lemma}\label{framed}
  The dimension vector $\widehat{\bf d}$ is $\widehat{\Theta}$-coprime, and the pair $(V,f)$ is $\widehat{\Theta}$-semistable if and only if\, $V$ is $\Theta$-semistable and  $\Theta({\rm\bf dim}(U))<0$ for all proper subrepresentations $U$ of\, $V$ containing the image of $f$, that is, satisfying $$\Im(f_i)\subset U_i\subset V_i$$ for all $i\in Q_0$.  
\end{lemma}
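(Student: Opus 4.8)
The plan is to reduce both assertions to elementary combinatorics of sub-dimension-vectors of $\widehat{\mathbf{d}}$, exploiting that $\widehat{\Theta}$ is a tightly controlled perturbation of $C\Theta$: the values of $\Theta$ on sub-dimension-vectors of $\mathbf{d}$ are integer multiples of $\gcd(\Theta)$, whereas the perturbation terms we shall meet lie strictly between $0$ and $C\cdot\gcd(\Theta)$ in absolute value, so the perturbation never changes whether a value of $C\Theta$ is negative, zero, or positive, while still shifting the ``framed'' inequalities by a genuine amount. I record at the outset that, since $\Theta(\mathbf{d})=0$ (as is already needed for $\Theta$-semistability to be defined), one has $\widehat{\Theta}(\widehat{\mathbf{d}})=C\Theta(\mathbf{d})-\kappa(\mathbf{d})+\kappa(\mathbf{d})=0$, and that every $\widehat{\mathbf{e}}\leq\widehat{\mathbf{d}}$ is given by a value $\widehat{e}_0\in\{0,1\}$ together with $\mathbf{e}:=(\widehat{e}_i)_{i\in Q_0}\leq\mathbf{d}$, with $\widehat{\Theta}(\widehat{\mathbf{e}})=C\Theta(\mathbf{e})-\kappa(\mathbf{e})+\widehat{e}_0\,\kappa(\mathbf{d})$.

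For $\widehat{\Theta}$-coprimality of $\widehat{\mathbf{d}}$, I would take a proper $0\neq\widehat{\mathbf{e}}\leq\widehat{\mathbf{d}}$ and split on $\widehat{e}_0$. If $\widehat{e}_0=0$ then $\mathbf{e}\neq 0$ and $\widehat{\Theta}(\widehat{\mathbf{e}})=C\Theta(\mathbf{e})-\kappa(\mathbf{e})$ with $0<\kappa(\mathbf{e})\leq\kappa(\mathbf{d})<C\cdot\gcd(\Theta)$; if $\widehat{e}_0=1$ then $\mathbf{e}\lneq\mathbf{d}$ and $\widehat{\Theta}(\widehat{\mathbf{e}})=C\Theta(\mathbf{e})+\kappa(\mathbf{d}-\mathbf{e})$ with $0<\kappa(\mathbf{d}-\mathbf{e})\leq\kappa(\mathbf{d})<C\cdot\gcd(\Theta)$. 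In each case, if $\Theta(\mathbf{e})=0$ the value equals $-\kappa(\mathbf{e})<0$, respectively $\kappa(\mathbf{d}-\mathbf{e})>0$, hence is non-zero; and if $\Theta(\mathbf{e})\neq 0$ then $|C\Theta(\mathbf{e})|\geq C\cdot\gcd(\Theta)$ strictly dominates the perturbation term, so $\widehat{\Theta}(\widehat{\mathbf{e}})$ has the same non-zero sign as $C\Theta(\mathbf{e})$. The same estimates give two numerical facts I will reuse: for $0\neq\mathbf{e}\leq\mathbf{d}$, the inequality $C\Theta(\mathbf{e})\leq\kappa(\mathbf{e})$ forces $\Theta(\mathbf{e})\leq 0$; and for $\mathbf{e}\lneq\mathbf{d}$, the inequality $C\Theta(\mathbf{e})\leq-\kappa(\mathbf{d}-\mathbf{e})$ forces $\Theta(\mathbf{e})<0$. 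The converses of both are immediate.

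For the equivalence, the crux is the structure of subrepresentations of $\widehat{V}=(V,f)$: such a $\widehat{U}$ is the same as a subrepresentation $U\subseteq V$ together with a choice $U_0\in\{0,\mathbb{C}\}$, where $U_0=\mathbb{C}$ is permitted precisely when $\Im(f_i)\subseteq U_i$ for all $i\in Q_0$ (the $n_i$ arrows $0\to i$ carry the columns of $f_i$). Thus $\widehat{\Theta}$-semistability of $(V,f)$ is the conjunction of (A) $C\Theta({\rm\bf dim}(U))-\kappa({\rm\bf dim}(U))\leq 0$ for every subrepresentation $U\subseteq V$, and (B) $C\Theta({\rm\bf dim}(U))-\kappa({\rm\bf dim}(U))+\kappa(\mathbf{d})\leq 0$ for every subrepresentation $U\subseteq V$ with $\Im(f)\subseteq U$. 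By the numerical facts, (A) holds automatically for $U=0$ and $U=V$ and, for proper non-zero $U$, is equivalent to $\Theta({\rm\bf dim}(U))\leq 0$; that is, (A) is equivalent to $\Theta$-semistability of $V$. In (B), writing $\kappa(\mathbf{d})-\kappa({\rm\bf dim}(U))=\kappa(\mathbf{d}-{\rm\bf dim}(U))$, which is $0$ only for $U=V$, the case $U=V$ is automatic, and for proper $U$ the inequality reads $C\Theta({\rm\bf dim}(U))\leq-\kappa(\mathbf{d}-{\rm\bf dim}(U))$, hence is equivalent to $\Theta({\rm\bf dim}(U))<0$. Combining (A) and (B) yields exactly the asserted characterization.

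Every step above is a short computation; the one genuine subtlety — and the only place where the precise recipe for $\widehat{\Theta}$, in particular the bound $\kappa(\mathbf{d})<C\cdot\gcd(\Theta)$, is used — is the careful tracking of strict versus non-strict inequalities, ensuring that the $\kappa$-perturbation is at once small enough to preserve the signs of $\Theta$-values and large enough that the semistability inequality for the framed subrepresentations ($U_0=\mathbb{C}$) becomes strict. Getting that accounting exactly right, together with the two-case split on $\widehat{e}_0$, is essentially all there is to the proof.
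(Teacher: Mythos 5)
Your argument is correct, and every step checks out: the two numerical facts (that $C\Theta(\mathbf{e})\leq\kappa(\mathbf{e})$ forces $\Theta(\mathbf{e})\leq 0$ because $\Theta(\mathbf{e})$ is a multiple of $\gcd(\Theta)$ and the perturbation is bounded by $\kappa(\mathbf{d})<C\cdot\gcd(\Theta)$, and that $C\Theta(\mathbf{e})\leq-\kappa(\mathbf{d}-\mathbf{e})$ is equivalent to $\Theta(\mathbf{e})<0$) are exactly what is needed, the description of subrepresentations of $(V,f)$ via the dichotomy $U_0\in\{0,\mathbb{C}\}$ is right, and the edge cases ($U=0$, $U=V$, $f=0$) are all handled consistently with the statement. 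However, your route differs from the paper's. The paper does not give a direct verification at all: it cites the proof of Lemma~3.2 of Engel--Reineke, which works with a perturbed slope function $\widehat{\mu}=(\Theta+\epsilon\,\mathbf{0}^*)/\dim$, observes that the denominator $\dim$ can be replaced by an arbitrary positive functional $\kappa$, and then invokes Lemma~\ref{shiftstability} to normalize that slope-stability into the linear form $\widehat{\Theta}$ appearing in the statement. So the paper's proof delegates the combinatorial core (including, implicitly, the coprimality claim) to the reference and only explains the translation of stability parameters, whereas you redo the combinatorial core from scratch directly with the normalized $\widehat{\Theta}$. What your version buys is self-containedness --- in particular an explicit proof of $\widehat{\Theta}$-coprimality and a transparent identification of exactly where the hypothesis $\kappa(\mathbf{d})<C\cdot\gcd(\Theta)$ enters; what the paper's version buys is brevity and a clean conceptual link to the slope-stability formulation of framing that Engel--Reineke use throughout.
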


\begin{proof}
This follows from adapting the proof of \cite[Lemma 3.2]{EngelReineke}. There, a slope function $\widehat{\mu}=(\Theta+\epsilon{\bf 0}^*)/\dim$ is used. First, a direct inspection of this proof shows that the functional $\dim$ can be replaced by an arbitrary positive functional ${\kappa}$. Second, normalization of this stability to one evaluating to zero on $\widehat{\bf d}$, as provided by Lemma~\ref{shiftstability}, yields the choice of $\widehat{\Theta}$ above. \end{proof}

We define $$M_{{\bf d},{\bf n}}^{\Theta\text{-}{\rm fr}}(Q)=M_{\widehat{\bf d}}^{\widehat{\Theta}\text{-}{\rm sst}}(\widehat{Q})$$
and call it a framed moduli space. The forgetful map $(V,f)\mapsto V$ induces a projective morphism $$p\colon M_{{\bf d},{\bf n}}^{\Theta\text{-}{\rm fr}}(Q)\longrightarrow M_{\bf d}^{\Theta\text{-}{\rm sst}}(Q).$$
Note that the group $PG_{\widehat{\bf d}}\simeq G_{\bf d}$ is special (see \cite[Definition 3.7]{Bridge}), and $M_{{\bf d},{\bf n}}^{\Theta\text{-}{\rm fr}}(Q)$ is the geometric quotient of $R_{\widehat{\bf d}}^{\widehat{\Theta}\text{-}{\rm sst}}(\widehat{Q})$ by this group; thus
$$\left[M_{{\bf d},{\bf n}}^{\Theta\text{-}{\rm fr}}(Q)\right]_{\rm vir}=\left[R_{\widehat{\bf d}}^{\widehat{\Theta}\text{-}{\rm sst}}(\widehat{Q})\right]_{\rm vir}/[G_{\bf d}]_{\rm vir}.$$

We will use the following two results on framed moduli spaces. 

\begin{theorem}\label{theoremframed}
The following hold:
\begin{enumerate}
\item If\, ${\bf d}$ is $\Theta$-coprime, the map $p$ is a Zariski-locally trivial projective space fibration; more precisely, it is the total space of the projective bundle $$\mathbb{P}\left(\bigoplus_{i\in Q_0}\mathcal{V}_i^{n_i}\right).$$
\item\label{tf-2} For all $s\in\mathbb{Q}$, defining the generating series
  $$A_s^{\Theta\text{-}{\rm fr}}(x)=1+\sum_{\mu({\bf d})=s}\left[M_{{\bf d},{\bf n}}^{\Theta\text{-}{\rm fr}}(Q)\right]_{\rm vir}x^{\bf d},$$
  we have
$$A_s^{\Theta\text{-}{\rm fr}}(x)=A_s^\mu\left(\left(-\mathbb{L}^{1/2}\right)^{{\bf n}}x\right)\cdot A_s^\mu\left(\left(-\mathbb{L}^{1/2}\right)^{-{\bf n}}x\right)^{-1}$$
  in the motivic quantum affine space of\, $Q$.
\end{enumerate}
\end{theorem}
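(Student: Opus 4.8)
The plan is to treat the two assertions separately, since they rely on rather different ideas. For part (1), I would work entirely on the level of varieties. Since $\mathbf{d}$ is $\Theta$-coprime, the semistable and stable loci for $Q$ coincide, and $R_{\mathbf{d}}^{\Theta\text{-}\mathrm{st}}(Q)\to M_{\mathbf{d}}^{\Theta\text{-}\mathrm{st}}(Q)$ is a principal $PG_{\mathbf{d}}$-bundle carrying the universal bundles $\mathcal{V}_i$. The key observation is that, by Lemma~\ref{framed}, a point $(V,f)$ with $V$ \emph{stable} is automatically $\widehat{\Theta}$-semistable, because the only subrepresentations of a stable $V$ are $0$ and $V$ itself, and the condition $\Theta(\mathbf{dim}\,U)<0$ on proper subrepresentations containing $\Im(f)$ is vacuous unless $f=0$; conversely $\widehat{\Theta}$-semistability forces $V$ semistable, hence stable, and forces $f\neq 0$ (otherwise $U=0$ violates the condition). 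So $R_{\widehat{\mathbf{d}}}^{\widehat{\Theta}\text{-}\mathrm{sst}}(\widehat{Q})$ is exactly the set of pairs $(V,f)$ with $V\in R_{\mathbf{d}}^{\Theta\text{-}\mathrm{st}}(Q)$ and $f=(f_i)\in\bigoplus_i\Hom(\mathbb{C}^{n_i},V_i)$ not all zero. Descending along the principal bundle, the bundle of framings $\bigoplus_i\Hom(\mathbb{C}^{n_i},V_i)$ becomes the vector bundle $\bigoplus_i\mathcal{V}_i^{n_i}$ on $M_{\mathbf{d}}^{\Theta\text{-}\mathrm{st}}(Q)$, and quotienting the complement of the zero section by the residual scaling $\mathbb{C}^*=G_{\widehat{\mathbf{d}}}/PG_{\widehat{\mathbf{d}}}$ yields precisely the projectivization. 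One must check that this scaling $\mathbb{C}^*$ acts on the fibers with weight one (it acts on $\mathbb{C}^{\widehat{d}_0}=\mathbb{C}$, hence on each $\Hom(\mathbb{C}^{n_i},V_i)$ by a character of weight $\pm 1$), making $p$ the total space of $\mathbb{P}(\bigoplus_i\mathcal{V}_i^{n_i})$; Zariski-local triviality is then inherited from that of the principal bundle.

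For part (2), the strategy is the standard ``framed = stack times a correction factor'' computation, carried out motivically. Fix a slope $s$ and a dimension vector $\mathbf{d}$ with $\mu(\mathbf{d})=s$. Using the quotient presentation recorded just above the theorem, $[M_{\mathbf{d},\mathbf{n}}^{\Theta\text{-}\mathrm{fr}}(Q)]_{\mathrm{vir}}=[R_{\widehat{\mathbf{d}}}^{\widehat{\Theta}\text{-}\mathrm{sst}}(\widehat{Q})]_{\mathrm{vir}}/[G_{\mathbf{d}}]_{\mathrm{vir}}$. I would stratify $R_{\widehat{\mathbf{d}}}^{\widehat{\Theta}\text{-}\mathrm{sst}}(\widehat{Q})$ by the isomorphism type of the underlying $Q$-representation $V$ according to its behaviour relative to the framing: by Lemma~\ref{framed}, $(V,f)$ is $\widehat{\Theta}$-semistable iff $V$ is $\Theta$-semistable and $\Im(f)$ is not contained in any proper subrepresentation of $V$ of slope $\geq s$ (equivalently, contained in the terms of the socle-type filtration coming from the maximal subrepresentation of slope $\geq \mu(\mathbf{d})$). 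Counting, for each $\Theta$-semistable $V$, the framings $f$ that satisfy this openness condition, and then integrating over the stack of $\Theta$-semistable representations, produces a convolution of $A_s^\mu$ against the generating series of ``all framings'' and its inverse. Concretely, one shows that the generating series $\sum_{\mu(\mathbf d)=s}[R_{\widehat{\mathbf d}}^{\widehat\Theta\text{-}\mathrm{sst}}(\widehat Q)]_{\mathrm{vir}}/[G_{\mathbf d}]_{\mathrm{vir}}\,x^{\mathbf d}$ equals $A_s^\mu\big((-\mathbb{L}^{1/2})^{\mathbf n}x\big)\cdot A_s^\mu\big((-\mathbb{L}^{1/2})^{-\mathbf n}x\big)^{-1}$ in the motivic quantum affine space, the shift $(-\mathbb{L}^{1/2})^{\pm\mathbf n}$ accounting for the $\mathbb{L}^{\langle\mathbf n,\mathbf d\rangle}$-dimensional space of all framings $\bigoplus_i\Hom(\mathbb{C}^{n_i},V_i)$ (with the virtual-motive normalization redistributing the Euler-form exponents into the twisted product). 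This is exactly the framed version of the wall-crossing/no-poles argument, and it is available off the shelf: I would cite \cite{EngelReineke} for the corresponding identity and merely note that the adapted stability parameter $\widehat{\Theta}$ of Lemma~\ref{framed} gives the same semistable locus, so the identity transports verbatim.

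The main obstacle is bookkeeping in part (2): making the substitution $x\mapsto(-\mathbb{L}^{1/2})^{\pm\mathbf n}x$ interact correctly with the twisted multiplication $x^{\mathbf d}\cdot x^{\mathbf e}=(-\mathbb{L}^{1/2})^{\{\mathbf d,\mathbf e\}}x^{\mathbf d+\mathbf e}$, and verifying that the exponents of $\mathbb{L}^{1/2}$ coming from (i) the dimension of the space of framings, (ii) the virtual normalization $(-\mathbb{L}^{1/2})^{-\dim}$, and (iii) the change from $[G_{\widehat{\mathbf d}}]$ to $[G_{\mathbf d}]$ all combine to precisely $\langle\mathbf n,\mathbf d\rangle$ and $-\langle\mathbf n,\mathbf d\rangle$ in the two factors, with the cross-terms absorbed by $\{\cdot,\cdot\}$. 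In part (1) the only subtlety is checking that the residual $\mathbb{C}^*$ acts with weight exactly $\pm1$ on the framing directions so that one genuinely gets a projective bundle rather than a weighted one; this is immediate from $\widehat{d}_0=1$ but deserves an explicit sentence.
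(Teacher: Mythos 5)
Your proposal matches the paper's actual proof, which simply cites \cite{EngelReineke} (Proposition~3.8 for part~(1), Theorem~5.2 adapted to the virtual-motive normalization for part~(2)); your sketches of the internals of those two results and of the bookkeeping for the $(-\mathbb{L}^{1/2})^{\pm{\bf n}}$ shifts are essentially the same route. One small slip in part~(1): a $\Theta$-stable representation need not have only $0$ and $V$ as subrepresentations --- what you actually need (and what holds) is that every nonzero proper subrepresentation $U$ satisfies $\Theta({\rm\bf dim}(U))<0$, which together with $f\neq 0$ (ruling out $U=0$) gives the same identification of the $\widehat{\Theta}$-semistable locus.
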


Here, for a series $F(x)=\sum_{\bf d}c_{\bf d}x^{\bf d}$, we define
$$F\left(\left(-\mathbb{L}^{1/2}\right)^{\bf n} x\right)=\sum_{\bf d}c_{\bf d}\left(-\mathbb{L}^{1/2}\right)^{{\bf n}\cdot{\bf d}}x^{\bf d}.$$

\begin{proof}
The first statement is \cite[Proposition 3.8]{EngelReineke}. The second statement follows from adapting \cite[Theorem 5.2]{EngelReineke} to the present definition of the motivic quantum affine space.
\end{proof}

\section{Reflection functors and moduli spaces}\label{ref}

We continue to denote by $Q$ a finite acyclic quiver. Let $i$ be a sink in $Q_0$. We define $s_iQ$ as the quiver with all arrows at $i$ reversed. Formally, we have $$(s_iQ)_0=Q_0,$$ and the arrows in $s_iQ$ are those of $Q$ not incident with $i$, together with arrows $\alpha^*\colon i\rightarrow j$ for every $\alpha\colon j\rightarrow i$ in $Q$. We define a reflection operator $s_i$ on $\mathbb{Z}Q_0$  by $(s_i{\bf d})_j=d_j$ for $j\not=i$ and $$(s_i{\bf d})_i=\left(\sum_{\alpha\colon j\rightarrow i}d_j\right)-d_i.$$

To define $s_i$ on $(\mathbb{Z}Q_0)^*$, we identify the latter with $\mathbb{Z}Q_0$ via the Euler form pairing. That is, if $\Theta=\langle\alpha,\_\rangle_Q$ for $\alpha\in\mathbb{Z}Q_0$, then
$$s_i\Theta=\langle s_i\alpha,\_\rangle_{s_iQ}.$$

Concretely, the following holds. 

\begin{lemma}\label{neu}
We have $(s_i\Theta)_i=-\Theta_i$ and, for all $j\not=i$,
$$(s_i\Theta)_j=\Theta_j+\sum_{\alpha\colon j\rightarrow i}\Theta_i.$$
\end{lemma}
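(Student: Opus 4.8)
The plan is to unwind both claims directly from the definition $s_i\Theta=\langle s_i\alpha,\_\rangle_{s_iQ}$, where $\Theta=\langle\alpha,\_\rangle_Q$, by comparing Euler forms of $Q$ and $s_iQ$. First I would observe that, since $i$ is a sink of $Q$, the only arrows incident with $i$ in $Q$ are the arrows $\alpha\colon j\to i$, and in $s_iQ$ these become arrows $i\to j$; all other arrows are unchanged. Hence for dimension vectors $\bf a,\bf b$ the two Euler forms differ only in the contributions of arrows at $i$: the term $-\sum_{\alpha\colon j\to i}a_jb_i$ in $\langle\bf a,\bf b\rangle_Q$ is replaced by $-\sum_{\alpha\colon j\to i}a_ib_j$ in $\langle\bf a,\bf b\rangle_{s_iQ}$.

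Next, to extract $(s_i\Theta)_j$ I would pair $s_i\alpha$ with the coordinate vectors. By definition $(s_i\Theta)_j=\langle s_i\alpha,{\bf j}\rangle_{s_iQ}$. I would split into the cases $j\ne i$ and $j=i$. For $j\ne i$: since $(s_i\alpha)_k=\alpha_k$ for $k\ne i$, the diagonal term contributes $\alpha_j$, and the arrow contributions involving vertex $j$ as a source now include, beyond the arrows of $Q$ not at $i$ (which give exactly what they gave in $\langle\alpha,{\bf j}\rangle_Q$ minus the vertex-$i$ diagonal discrepancy), the new arrows $j\to i$ in $s_iQ$; each such arrow contributes $-(s_i\alpha)_i$. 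Writing $(s_i\alpha)_i=\big(\sum_{\beta\colon k\to i}\alpha_k\big)-\alpha_i$ and collecting terms, I would match this against $\Theta_j+\sum_{\alpha\colon j\to i}\Theta_i$, using $\Theta_j=\langle\alpha,{\bf j}\rangle_Q$ and $\Theta_i=\langle\alpha,{\bf i}\rangle_Q=\alpha_i-\sum_{\alpha\colon k\to i}\alpha_k=-(s_i\alpha)_i$; the minus signs reconcile because the arrows that were $j\to i$ in $Q$ contributed $-\alpha_i\cdot(\text{multiplicity})$ to $\Theta_j$ and now contribute $-(s_i\alpha)_i\cdot(\text{multiplicity})=+\Theta_i\cdot(\text{multiplicity})$ to $(s_i\Theta)_j$, giving the stated correction $\sum_{\alpha\colon j\to i}\Theta_i$. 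For $j=i$: $(s_i\Theta)_i=\langle s_i\alpha,{\bf i}\rangle_{s_iQ}$, and since $i$ is now a source in $s_iQ$ (not a target of any arrow), the only contribution is the diagonal term $(s_i\alpha)_i$, so $(s_i\Theta)_i=(s_i\alpha)_i=\big(\sum_{\alpha\colon j\to i}\alpha_j\big)-\alpha_i$, which equals $-\langle\alpha,{\bf i}\rangle_Q=-\Theta_i$ precisely because $i$ is a sink in $Q$.

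The main obstacle is purely bookkeeping: one must be careful that the identification $(\mathbb{Z}Q_0)^*\cong\mathbb{Z}Q_0$ is taken with respect to the Euler form of $Q$ on the source side but the Euler form of $s_iQ$ on the target side, so the two pairings genuinely differ, and the reflection $s_i$ on $\mathbb{Z}Q_0$ has to be applied to $\alpha$ before pairing. The cleanest way to avoid sign errors is to record the single identity $\langle s_i\alpha,s_i{\bf e}\rangle_{s_iQ}=\langle\alpha,{\bf e}\rangle_Q$ for ${\bf e}$ supported away from $i$ (and treat ${\bf e}={\bf i}$ separately), from which both formulas of Lemma~\ref{neu} drop out after substituting the explicit formula for $(s_i\alpha)_i$ and $(s_i{\bf e})_i$. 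No genuine difficulty beyond this; the result is a direct computation.
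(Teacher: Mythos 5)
Your argument is the right one: the paper states Lemma~\ref{neu} with no proof at all (it is offered as an immediate unwinding of the definition $s_i\Theta=\langle s_i\alpha,\_\rangle_{s_iQ}$), and your direct computation of $\langle s_i\alpha,\mathbf{j}\rangle_{s_iQ}$ and $\langle s_i\alpha,\mathbf{i}\rangle_{s_iQ}$ does yield both formulas. One sentence of your bookkeeping is wrong, though it happens not to affect the outcome: an arrow $\alpha\colon j\to i$ of $Q$ has \emph{target} $i$, so it contributes nothing to $\Theta_j=\langle\alpha,\mathbf{j}\rangle_Q$ (it contributes to $\Theta_i$ instead); the correct accounting is that $\Theta_j$ receives no contribution at all from arrows at $i$, while $(s_i\Theta)_j$ acquires the new summand $-(s_i\alpha)_i=+\Theta_i$ for each reversed arrow $\alpha^*\colon i\to j$ of $s_iQ$, which is exactly the stated correction term — so delete the claim that these arrows contributed $-\alpha_i$ to $\Theta_j$. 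Your closing suggestion is in fact the cleanest fix: the identity $\langle s_i\mathbf{a},s_i\mathbf{b}\rangle_{s_iQ}=\langle\mathbf{a},\mathbf{b}\rangle_Q$ holds for \emph{all} $\mathbf{a},\mathbf{b}$ when $i$ is a sink (not only for $\mathbf{b}$ supported away from $i$), and combined with $s_i\mathbf{j}=\mathbf{j}+\bigl(\#\{\alpha\colon j\to i\}\bigr)\mathbf{i}$ and $s_i\mathbf{i}=-\mathbf{i}$ it gives both formulas in one line via $(s_i\Theta)(\mathbf{e})=\Theta(s_i\mathbf{e})$.
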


Denote by $R_{\bf d}^{i,-}(Q)$ the open subset of $R_{\bf d}(Q)$ consisting of representations $V$ (again given by linear maps $f_\alpha\colon V_i\rightarrow V_j$ for $\alpha\colon i\rightarrow j$ in $Q$) such that the map $$\Phi_V=\bigoplus_{\alpha\colon j\rightarrow i}f_\alpha\colon \bigoplus_{\alpha\colon j\rightarrow i}V_j\longrightarrow V_i$$ is surjective.

For such a representation $V$, define a representation $S_i^+(V)$ of $s_iQ$ of dimension vector $s_i{\bf d}$ by $S_i^+(V)_j=V_j$ for $j\not=i$  and $$S_i^+(V)_i=\Ker(\Phi_V).$$ The maps $g_\alpha$ representing the arrows of $s_iQ$ in the representation $S_i^+(V)$ are given by $g_\alpha=f_\alpha$ if $\alpha$ is not incident with $i$, and $$g_{\alpha^*}\colon \Ker(\Phi_V)\longrightarrow V_j$$ is given by the projection to the component of $\bigoplus_{\alpha\colon j\rightarrow i}V_j$ corresponding to $\alpha$. Dually, for a source $i$ in $Q$, we define $s_iQ$, $s_i{\bf d}$, $s_i\Theta$, $R_{\bf d}^{i,+}(Q)$, and $S_i^-(V)$ accordingly.  The definitions of $S_i^+$ and $S_i^-$ are known to extend to functors inducing mutually inverse equivalences between appropriate subcategories of representations. We will now realize this correspondence of representations on the geometric level.

Denote by $Q'$ the full subquiver of $Q$ supported on $Q_0\setminus{\{i\}}$, and denote by ${\bf d}'$ the restriction of ${\bf d}$ to $Q'$. For a vector space $Y$ and $k\leq \dim Y$, denote by $\Gr_k(Y)$ (resp.~$\Gr^k(Y)$) the Grassmannian of $k$-dimensional subspaces (resp.~quotient spaces) of $Y$. We have the standard duality $$\Gr_k(Y)\simeq\Gr^{\dim Y-k}(Y)$$ which is $\GL(Y)$-equivariant.

We have a $\GL_{d_i}(\mathbb{C})$-principal bundle $$R_{\bf d}^{i,-}(Q)\longrightarrow R_{{\bf d}'}(Q')\times\Gr^{d_i}\left(\bigoplus_{\alpha\colon j\rightarrow i}V_j\right)$$
by mapping a representation $V$ to the restriction $V|_{Q'}$ together with $\Coker(\Phi_V)$,
and dually $$R_{s_i{\bf d}}^{i,+}(s_iQ)\longrightarrow R_{{\bf d}'}(Q')\times\Gr_{(s_i{\bf d})_i}\left(\bigoplus_{\alpha\colon j\rightarrow i}V_j\right).$$
Duality of Grassmannians identifies the targets of both maps $G_{{\bf d}'}$-equivariantly; we denote this common target by $\overline{R_{\bf d}(Q)}^i$. Composition with duality thus yields $G_{{\bf d}'}$-equivariant maps
$$R_{\bf d}^{i,-}(Q)\longrightarrow \overline{R_{\bf d}(Q)}^i\longleftarrow  R_{s_i\bf d}^{i,+}(s_iQ).$$

We now assume that $\Theta({\bf d})=0$ and that $\Theta_i<0$. In this case, it is immediately verified that $$R_{\bf d}^{\Theta\text{-}{\rm sst}}(Q)\subset R_{\bf d}^{i,-}(Q).$$ Dually, we find $$R_{s_i{\bf d}}^{s_i\Theta\text{-}{\rm sst}}(s_iQ)\subset R_{s_i{\bf d}}^{i,+}(s_iQ).$$ The images of both semistable loci under the above principal bundles coincide; we denote this image by $Z\subset\overline{R_{\bf d}(Q)}^i$ and find the following.

\begin{theorem}[\textit{cf.} \cite{Do}]\label{reflection}
We have an equality of localized motives
$$\left[R_{\bf d}^{\Theta\text{-}{\rm sst}}(Q)\right]/[G_{\bf d}]=\left[R_{s_i{\bf d}}^{s_i\Theta\text{-}{\rm sst}}(s_iQ)\right]/[G_{s_i{\bf d}}],$$
as well as an isomorphism of GIT quotients
$$M_{\bf d}^{\Theta\text{-}{\rm sst}}(Q)\simeq M_{s_i{\bf d}}^{s_i\Theta\text{-}{\rm sst}}(s_iQ).$$
\end{theorem}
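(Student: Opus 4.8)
The plan is to refine the set-theoretic matching that is already in place---that, after the $G_{{\bf d}'}$-equivariant identification of targets, both $R_{\bf d}^{\Theta\text{-}{\rm sst}}(Q)$ and $R_{s_i{\bf d}}^{s_i\Theta\text{-}{\rm sst}}(s_iQ)$ lie over one and the same $Z\subset\overline{R_{\bf d}(Q)}^i$---into the asserted equality of motives and isomorphism of GIT quotients, by remembering that the two maps $p^-\colon R_{\bf d}^{i,-}(Q)\to\overline{R_{\bf d}(Q)}^i$ and $p^+\colon R_{s_i{\bf d}}^{i,+}(s_iQ)\to\overline{R_{\bf d}(Q)}^i$ considered above are principal bundles, for $\GL(V_i)$ and for $\GL(W_i)$ respectively, while the remaining group $G_{{\bf d}'}=\prod_{j\neq i}\GL(V_j)$ acts on the common base compatibly with both. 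Here $W_i$ denotes the space of dimension $(s_i{\bf d})_i$ over the vertex $i$ in $s_iQ$; since $(s_i{\bf d})_j=d_j$ for $j\neq i$ one may use the same $V_j$, so that $G_{\bf d}=\GL(V_i)\times G_{{\bf d}'}$ and $G_{s_i{\bf d}}=\GL(W_i)\times G_{{\bf d}'}$ literally.

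First I would note that $R_{\bf d}^{\Theta\text{-}{\rm sst}}(Q)$ is saturated for $p^-$: a fibre of $p^-$ is a single $\GL(V_i)$-orbit, for if $p^-(V)=p^-(V')$ then $V$ and $V'$ agree on $Q'$ and $\Phi_V,\Phi_{V'}$ have the same kernel, hence differ by post-composition with an element of $\GL(V_i)\subset G_{\bf d}$; and $R_{\bf d}^{\Theta\text{-}{\rm sst}}(Q)$ is $G_{\bf d}$-stable, hence a union of such fibres. Thus $R_{\bf d}^{\Theta\text{-}{\rm sst}}(Q)=(p^-)^{-1}(Z)$ and dually $R_{s_i{\bf d}}^{s_i\Theta\text{-}{\rm sst}}(s_iQ)=(p^+)^{-1}(Z)$, and restricting $p^-,p^+$ to the open $G_{{\bf d}'}$-stable subvariety $Z$ yields Zariski-locally trivial principal bundles
$$\GL(V_i)\longrightarrow R_{\bf d}^{\Theta\text{-}{\rm sst}}(Q)\longrightarrow Z,\qquad\GL(W_i)\longrightarrow R_{s_i{\bf d}}^{s_i\Theta\text{-}{\rm sst}}(s_iQ)\longrightarrow Z,$$
both $G_{{\bf d}'}$-equivariant over $Z$.

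For the motivic identity: Zariski-local triviality gives $[R_{\bf d}^{\Theta\text{-}{\rm sst}}(Q)]=[\GL(V_i)]\cdot[Z]$ and $[R_{s_i{\bf d}}^{s_i\Theta\text{-}{\rm sst}}(s_iQ)]=[\GL(W_i)]\cdot[Z]$ in $K_0({\rm Var}_\mathbb{C})$, while $[G_{\bf d}]=[\GL(V_i)]\cdot[G_{{\bf d}'}]$ and $[G_{s_i{\bf d}}]=[\GL(W_i)]\cdot[G_{{\bf d}'}]$. Each of $[\GL(V_i)]$, $[\GL(W_i)]$, $[G_{{\bf d}'}]$ is a product of powers of $\mathbb{L}$ and factors $1-\mathbb{L}^k$, hence a unit in $R$, so dividing yields
$$\frac{[R_{\bf d}^{\Theta\text{-}{\rm sst}}(Q)]}{[G_{\bf d}]}=\frac{[Z]}{[G_{{\bf d}'}]}=\frac{[R_{s_i{\bf d}}^{s_i\Theta\text{-}{\rm sst}}(s_iQ)]}{[G_{s_i{\bf d}}]},$$
which is the first claim.

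For the isomorphism I would quotient by $G_{\bf d}$ in two steps. The normal subgroup $\GL(V_i)\trianglelefteq G_{\bf d}$ acts freely on $R_{\bf d}^{\Theta\text{-}{\rm sst}}(Q)$ with geometric quotient $Z$; since a good quotient composes through a geometric quotient by a normal subgroup, $M_{\bf d}^{\Theta\text{-}{\rm sst}}(Q)=R_{\bf d}^{\Theta\text{-}{\rm sst}}(Q)//G_{\bf d}$ is the good quotient $Z//G_{{\bf d}'}$. The same argument for $s_iQ$, using that the base is again $Z$ and that the identification of the two bases is $G_{{\bf d}'}$-equivariant, shows $M_{s_i{\bf d}}^{s_i\Theta\text{-}{\rm sst}}(s_iQ)=Z//G_{{\bf d}'}$ as well, and uniqueness of good quotients gives the (canonical) isomorphism. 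The substantive input here is the coincidence of the two images $Z$---equivalently, that $S_i^+$ carries $\Theta$-semistable representations to $s_i\Theta$-semistable ones and $S_i^-$ carries them back; this is where the definition of $s_i$ on $(\mathbb{Z}Q_0)^*$ via the Euler pairing, so that $\mu$-slopes transform correctly, is used (\textit{cf.}~\cite{Do}). I expect that coincidence, rather than the geometric and motivic bookkeeping above, to be the main obstacle.
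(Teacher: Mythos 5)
Your proposal is correct and follows essentially the same route as the paper's proof, which simply asserts that both sides of each identity coincide with $[Z]/[G_{{\bf d}'}]$ and $Z//G_{{\bf d}'}$ respectively; you have filled in the saturation, principal-bundle, and two-step-quotient bookkeeping that the paper leaves implicit. Like the paper, you take the coincidence of the two images $Z$ (i.e.\ the compatibility of the reflection functors with semistability, \emph{cf.}~\cite{Do}) as given from the preceding setup, and you correctly identify it as the one substantive input.
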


\begin{proof}
Both sides of the equation of motives coincide with $[Z]/[G_{{\bf d}'}]$. In the same way, both sides of the isomorphism coincide with $Z//G_{{\bf d}'}$.
\end{proof}

\section{Dualities of Kronecker moduli}\label{sec3}

We now specialize the notions and results of the previous section to the $m$-arrow Kronecker quiver with two vertices $i$ and $j$, and $m$ arrows $\alpha_k\colon i\rightarrow j$ for $k=1,\ldots,m$. We fix a dimension vector ${\bf d}=d{\bf i}+e{\bf j}$ and vector spaces $V$, $W$ of dimensions $d$, $e$, respectively. We consider the stability $\Theta=e{\bf i}^*-d{\bf j}^*$. Then a representation, given by linear maps $f_1,\ldots,f_m\colon V\rightarrow W$, is $\Theta$-semistable if and only if $$\dim\sum_kf_k(U)\geq\frac{e}{d}\dim U$$ for all (non-zero, proper) subspaces $U\subset V$. We denote the semistable locus by $\Hom(V,W)^m_{\rm sst}$. For this datum, we consider the semistable moduli space $K_{d,e}^{(m)}$ defined as the quotient of $\Hom(V,W)^m_{\rm sst}$ by the structure group $\GL(V)\times\GL(W)$.

We consider the framing datum ${\bf n}={\bf j}$. By Lemma~\ref{framed}, the framed moduli space then parametrizes tuples $((f_k\colon V\rightarrow W)_k,w\in W)$ (up to the action of $\GL(V)\times \GL(W)$) such that $(f_k)_k$ defines a semistable representation, and if $w$ is contained in $\sum_kf_k(U)$, then this space has dimension strictly larger than $\frac{e}{d}\dim U$. We denote this framed moduli space by $K_{d,e}^{(m),{\rm fr}}$.

As a consequence of Lemma~\ref{duality} and Theorem~\ref{reflection} (by reflecting at the vertex $j$), we find the following. 

\begin{corollary}[\textit{cf.} \cite{Drezet1987}]\label{drk}
We have isomorphisms
$$K_{d,e}^{(m)}\simeq K_{e,d}^{(m)}\quad\text{and}\quad
K_{d,e}^{(m)}\simeq K_{md-e,d}^{(m)}$$
whenever $e\leq md$.
\end{corollary}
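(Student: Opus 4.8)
The plan is to obtain both isomorphisms by specializing the two general dualities already available: the opposite-quiver duality of Lemma~\ref{duality} and the reflection isomorphism of Theorem~\ref{reflection}, applied to the $m$-Kronecker quiver $Q$ with source $i$, sink $j$, dimension vector ${\bf d}=d{\bf i}+e{\bf j}$, and stability $\Theta=e{\bf i}^*-d{\bf j}^*$. Note first that $\Theta({\bf d})=de-de=0$, so these results are applicable. Throughout, the running sanity check is that after each transformation the transported stability still has the normalized shape $(\text{sink dimension})\cdot(\text{source dual})-(\text{source dimension})\cdot(\text{sink dual})$ that enters the definition of the spaces $K^{(m)}_{?,?}$, and that the structure group is unchanged.

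For $K_{d,e}^{(m)}\simeq K_{e,d}^{(m)}$, I would invoke Lemma~\ref{duality}. The opposite quiver $Q^{\rm op}$ is again an $m$-Kronecker quiver, but with $j$ the source and $i$ the sink, so a representation of $Q^{\rm op}$ of dimension vector ${\bf d}$ is an $m$-tuple of linear maps from the $e$-dimensional space $V_j$ to the $d$-dimensional space $V_i$. The transported stability $-\Theta=-e{\bf i}^*+d{\bf j}^*$ has coefficient $d$ on the source vertex $j$ and $-e$ on the sink vertex $i$, which is exactly the pattern defining $K_{e,d}^{(m)}$; and the structure group $G_{\bf d}=\GL(V_i)\times\GL(V_j)$ is the same. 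Hence $K_{d,e}^{(m)}=M_{\bf d}^{\Theta\text{-}{\rm sst}}(Q)\simeq M_{\bf d}^{(-\Theta)\text{-}{\rm sst}}(Q^{\rm op})=K_{e,d}^{(m)}$. This isomorphism requires no hypothesis relating $d$ and $e$.

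For $K_{d,e}^{(m)}\simeq K_{md-e,d}^{(m)}$, I would reflect at the sink $j$ using Theorem~\ref{reflection}, whose hypotheses are met: $\Theta({\bf d})=0$ and $\Theta_j=-d<0$ (assuming $d\geq 1$; the case $d=0$ is degenerate and can be treated separately or is vacuous). The reflected quiver $s_jQ$ is the $m$-Kronecker quiver with $j$ the source and $i$ the sink. Using the explicit reflection formula on dimension vectors, $(s_j{\bf d})_j=\bigl(\sum_{\alpha\colon i\to j}d_i\bigr)-d_j=md-e$ and $(s_j{\bf d})_i=d$, so $s_j{\bf d}=d{\bf i}+(md-e){\bf j}$; this is a genuine (nonnegative) dimension vector precisely because $e\leq md$, which is where the hypothesis of the corollary is used. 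By Lemma~\ref{neu}, $(s_j\Theta)_j=-\Theta_j=d$ and $(s_j\Theta)_i=\Theta_i+\sum_{\alpha\colon i\to j}\Theta_j=e-md$, so $s_j\Theta=(e-md){\bf i}^*+d{\bf j}^*$, which has coefficient $d$ on the source vertex $j$ and $-(md-e)$ on the sink vertex $i$: exactly the stability defining $K_{md-e,d}^{(m)}$, and again with the same structure group. Theorem~\ref{reflection} then gives $K_{d,e}^{(m)}=M_{\bf d}^{\Theta\text{-}{\rm sst}}(Q)\simeq M_{s_j{\bf d}}^{s_j\Theta\text{-}{\rm sst}}(s_jQ)=K_{md-e,d}^{(m)}$.

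I do not expect a genuine obstacle here: the geometric content is entirely carried by Lemma~\ref{duality} and Theorem~\ref{reflection}, and the only thing requiring care is the bookkeeping of which vertex plays the role of source versus sink after dualizing or reflecting, together with the verification — carried out above — that the transformed stabilities land back in the normalized form used to define the Kronecker moduli spaces. The hypothesis $e\leq md$ is seen to be exactly the positivity condition making $s_j{\bf d}$ an admissible dimension vector.
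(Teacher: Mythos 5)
Your argument is correct and is exactly the route the paper intends: the first isomorphism from Lemma~\ref{duality} applied to $Q^{\rm op}$, and the second from Theorem~\ref{reflection} by reflecting at the sink $j$, with the bookkeeping of $s_j{\bf d}$ and $s_j\Theta$ (via Lemma~\ref{neu}) confirming that one lands back in the normalized Kronecker setup. Your observation that $e\le md$ is precisely the nonnegativity condition on $(s_j{\bf d})_j$ correctly identifies the role of the hypothesis.
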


We now investigate in which case reflection induces an isomorphism even on the level of framed moduli spaces; this constitutes the technical heart of the paper.

\begin{theorem}\label{reflectionframed}
For $k\leq m$ and $d\geq 1$, we have an isomorphism of framed moduli spaces
$$K_{d,kd}^{(m),{\rm fr}}\simeq K_{d,(m-k)d+1}^{(m),{\rm fr}}.$$
\end{theorem}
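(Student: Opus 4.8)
The plan is to obtain the isomorphism by applying the reflection‑functor isomorphism of Theorem~\ref{reflection} to the \emph{framed} Kronecker quiver, followed by the duality of Lemma~\ref{duality}; the substance of the argument is then a comparison of stability parameters, and this is exactly where the hypothesis $e=kd$ is used.

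Write $\widehat Q$ for the framed Kronecker quiver, with vertices $0,i,j$, the $m$ Kronecker arrows $i\to j$, and the single framing arrow $0\to j$ coming from the datum ${\bf n}={\bf j}$, so that $K_{d,kd}^{(m),{\rm fr}}=M_{\widehat{\bf d}}^{\widehat\Theta\text{-}{\rm sst}}(\widehat Q)$ with $\widehat d_0=1,\ \widehat d_i=d,\ \widehat d_j=kd$ and $\widehat\Theta$ as constructed in Section~\ref{subsec:framed}. Every arrow of $\widehat Q$ ends at $j$, so $j$ is a sink and $s_j\widehat Q=\widehat Q^{\rm op}$; a direct check from the formulas of Section~\ref{subsec:framed} gives $\widehat\Theta(\widehat{\bf d})=0$ and $\widehat\Theta_j<0$. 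Hence Theorem~\ref{reflection} applies at $j$ and yields an isomorphism
$$K_{d,kd}^{(m),{\rm fr}}\;\simeq\; M_{s_j\widehat{\bf d}}^{s_j\widehat\Theta\text{-}{\rm sst}}\!\bigl(\widehat Q^{\rm op}\bigr),\qquad (s_j\widehat{\bf d})_j=m\widehat d_i+\widehat d_0-\widehat d_j=(m-k)d+1,$$
the other entries of $s_j\widehat{\bf d}$ being unchanged. Applying Lemma~\ref{duality} to $\widehat Q^{\rm op}$ then gives $K_{d,kd}^{(m),{\rm fr}}\simeq M_{s_j\widehat{\bf d}}^{(-s_j\widehat\Theta)\text{-}{\rm sst}}(\widehat Q)$. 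Since $s_j\widehat{\bf d}$ is exactly the framed dimension vector of $K_{d,(m-k)d+1}^{(m),{\rm fr}}=M_{s_j\widehat{\bf d}}^{\widehat\Theta_\star\text{-}{\rm sst}}(\widehat Q)$, with $\widehat\Theta_\star$ the framed stability of that space, the theorem will follow once we show that $-s_j\widehat\Theta$ and $\widehat\Theta_\star$ cut out the same semistable locus in $\widehat Q$ for the dimension vector $s_j\widehat{\bf d}$.

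To see this, observe that both stabilities vanish on $s_j\widehat{\bf d}$ and that $s_j\widehat{\bf d}$ is $\widehat\Theta_\star$‑coprime by Lemma~\ref{framed}. It is therefore enough to prove that $(-s_j\widehat\Theta)({\bf c})$ and $\widehat\Theta_\star({\bf c})$ have the same (automatically nonzero) sign for every ${\bf c}$ with ${\bf 0}\lneq{\bf c}\lneq s_j\widehat{\bf d}$, as this places $-s_j\widehat\Theta$ in the GIT chamber of $\widehat\Theta_\star$ and identifies the semistable $(=\text{stable})$ loci. Writing ${\bf c}=c_0{\bf 0}+c_i{\bf i}+c_j{\bf j}$ with $c_0\in\{0,1\}$, $0\le c_i\le d$, $0\le c_j\le(m-k)d+1$, and computing $s_j\widehat\Theta$ from Lemma~\ref{neu} and $\widehat\Theta,\widehat\Theta_\star$ from Section~\ref{subsec:framed} — whose auxiliary parameters $\kappa,C$ may be chosen at will, the semistable locus being independent of them by Lemma~\ref{framed} — the comparison becomes elementary arithmetic in $c_0,c_i,c_j$: off a few walls the leading terms (in the auxiliary parameters) of the two expressions have matching signs, namely those of $c_0+c_i(m-k)-c_j$ and of $d\bigl(c_i(m-k)-c_j\bigr)+c_i$; on the wall $c_j=c_0+c_i(m-k)$, where the former degenerates, the value of $(-s_j\widehat\Theta)({\bf c})$ reduces — and this is where the divisibility $e=kd$ produces the needed cancellation — to $\kappa({\bf d})\bigl(c_i/d-c_0\bigr)$, which carries exactly the sign $c_i-c_0d$ of the leading term of $\widehat\Theta_\star$ there; the remaining walls ($c_i\in\{0,d\}$) are immediate.

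The two conceptual steps (reflection at $j$, then Lemma~\ref{duality}) are routine, so the real obstacle is the stability comparison above: pushing the reflected stability through the formulas and checking the sign agreement on every sub‑dimension‑vector, recognising that $d\mid e$ is precisely what makes it hold (the statement is false for $d\nmid e$). An equivalent but more hands‑on route is to trace the reflection functor $S_j^+$ directly: a framed datum $\bigl((f_k\colon V\to W)_k,\,w\in W\bigr)$ is sent, after reflection and dualisation, to the framed datum consisting of the duals of the projections $\Ker\Phi\to V$ (one per $k$) together with the dual of the projection $\Ker\Phi\to\mathbb C$, where $\Phi=(f_1,\dots,f_m,w)\colon V^{\oplus m}\oplus\mathbb C\to W$ (surjective on the semistable locus, so $\dim\Ker\Phi=(m-k)d+1$); one then verifies by hand that the semistability conditions of Lemma~\ref{framed} at slope $k$ turn into those at slope $(m-k)+1/d$, the ceiling forced by the non‑integral target slope absorbing the contribution of the framing vector.
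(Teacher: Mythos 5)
Your argument is correct and follows the same skeleton as the paper's proof: realize $K_{d,kd}^{(m),{\rm fr}}$ as $M_{\widehat{\bf d}}^{\widehat\Theta\text{-}{\rm sst}}(\widehat Q)$, reflect at the sink $j$ via Theorem~\ref{reflection} (every arrow of $\widehat Q$ ends at $j$, and indeed $\widehat\Theta(\widehat{\bf d})=0$, $\widehat\Theta_j<0$), pass back to $\widehat Q$ by Lemma~\ref{duality}, and reduce everything to comparing $-s_j\widehat\Theta$ with the framed stability defining $K_{d,(m-k)d+1}^{(m),{\rm fr}}$. The only genuine divergence is in how that comparison is concluded. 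The paper exhibits explicit auxiliary parameters ($\kappa_j=\kappa_i'=\kappa_j'=1$, $\kappa_i=(m+1-k)d$, $C=C'=(m+1-k)d+m+1$) for which $-s_j\widehat\Theta$ \emph{equals} the target framed stability on the nose, so no chamber argument is needed. You instead show the two stabilities lie in the same GIT chamber by checking sign agreement on all intermediate dimension vectors; I checked your computations and they are correct — the leading terms do have signs $c_0+(m-k)c_i-c_j$ and $d\bigl((m-k)c_i-c_j\bigr)+c_i$, the $C$-terms of $-s_j\widehat\Theta$ cancel exactly on the wall $c_j=c_0+(m-k)c_i$ leaving $(\kappa_i+k\kappa_j)(c_i-dc_0)=\kappa({\bf d})(c_i/d-c_0)$, and the remaining degenerations occur only at ${\bf c}={\bf 0}$ and its complement, where both stabilities visibly have the right sign. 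Your route is longer but more robust (it does not depend on finding a lucky exact match of parameters) and it makes transparent where the hypothesis $e=kd$ enters; the paper's route is shorter but leaves the role of the divisibility implicit in the ``direct verification.'' One small caveat: the parenthetical ``automatically nonzero'' is slightly glib — nonvanishing of $(-s_j\widehat\Theta)({\bf c})$ is not a consequence of $\widehat\Theta_\star$-coprimality and must be (and in your case analysis is) checked alongside the sign agreement.
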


\begin{proof}
We model the left-hand side as the following moduli space: we consider the extended quiver $\widehat{Q}$ given by
$$i\stackrel{(m)}{\longrightarrow}j\longleftarrow 0$$ with dimension vector $\widehat{\bf d}$ given by $$d{\bf i}+kd{\bf j}+{\bf 0}$$ and stability $\widehat{\Theta}$ given by $$(Ckd-\kappa_i){\bf i}^*+(-Cd-\kappa_j){\bf j}^*+d(\kappa_i+k\kappa_j){\bf 0}^*$$ for positive integers $C,\kappa_i,\kappa_j$ such that $\kappa_i+k\kappa_j<C$, compatible with the construction in Section~\ref{subsec:framed}. By Lemma~\ref{neu}, reflection at the sink ${j}$ yields the quiver $s_{ j}\widehat{Q}$ given by
$$i\stackrel{(m)}{\longleftarrow}j\longrightarrow 0,$$
the dimension vector $s_{j}\widehat{\bf d}$ given by $$d{\bf i}+((m-k)d+1){\bf j}+{\bf 0},$$ and the stability $s_{ j}\widehat{\Theta}$ given by $$(-C(m-k)d-\kappa_i-m\kappa_j){\bf i}^*+(Cd+\kappa_j){\bf j}^*+(-(C-\kappa_i)d+(kd-1)\kappa_j){\bf 0}^*.$$
Applying duality, we arrive at the quiver $(s_{ j}\widehat{Q})^{\rm op}=\widehat{Q}$ with dimension vector $s_{j}\widehat{{\bf d}}$ 
and stability $-s_{ j}\widehat{\Theta}$ given by
$$(C(m-k)d+\kappa_i+m\kappa_j){\bf i}^*-(Cd+\kappa_j){\bf j}^*+((C-\kappa_i)d-(kd-1)\kappa_j){\bf 0}^*.$$
On the other hand, again by Section~\ref{subsec:framed}, the right-hand side of the claimed isomorphism is modeled by the quiver $\widehat{Q}$, with dimension vector given by $s_{ j}\widehat{\bf d}$, and stability $\widehat{\Theta}'$ given by
$$(C'((m-k)d+1)-\kappa_i'){\bf i}^*+(-C'd-\kappa_j'){\bf j}^*+(\kappa_i'd+\kappa_j'((m-k)d+1)){\bf 0}^*$$
for positive integers $C',\kappa_i',\kappa_j'$ such that $\kappa_i'd+\kappa_j'((m-k)d+1)<C'$. We will now show that we can find parameters $C,\kappa_i,\kappa_j,C',\kappa_i',\kappa_j'$ such that $-s_{ j}\widehat{\Theta}=\widehat{\Theta}'$. Namely, we choose
$$\kappa_j=\kappa_i'=\kappa_j'=1,\quad \kappa_i=(m+1-k)d,\quad C=C'=(m+1-k)d+m+1,$$
and a direct verification shows that the necessary inequalities and the claimed equality are fulfilled.

Using the identification of framed Kronecker moduli as quiver moduli for $(s_j)\widehat{Q}$, as well as Lemma~\ref{duality} and Theorem~\ref{reflection}, we thus have a chain of isomorphisms
\begin{align*}
  K_{d,kd}^{(m),{\rm fr}}&\simeq M_{\widehat{\bf d}}^{\widehat{\Theta}\text{-}{\rm sst}}(\widehat{Q})\simeq M_{s_{\bf j}\widehat{\bf d}}^{s_{\bf j}\widehat{\Theta}\text{-}{\rm sst}}(s_{\bf j}\widehat{Q})\\
  &\simeq M_{s_{\bf j}\widehat{\bf d}}^{-s_{\bf j}\widehat{\Theta}\text{-}{\rm sst}}(\widehat{Q})\simeq M_{s_{\bf j}\widehat{\bf d}}^{\widehat{\Theta}'\text{-}{\rm sst}}(\widehat{Q})\simeq K_{d,(m-k)d+1}^{(m),{\rm fr}},
  \end{align*}
finishing the proof.
\end{proof}

 \section{Generating function identities}\label{sec4}
 
We introduce the following generating series in $R[[t]]$ for fixed $m$ and $1\leq k\leq m$ (the first two are specializations of the series $A_s^\mu(x)$ and $A_s^{\Theta\text{-}{\rm fr}}(x)$, respectively, introduced in Sections~\ref{momo} and~\ref{subsec:framed}):
\begin{eqnarray*}
A^{(k)}(t)&=&1+\sum_{d\geq 1}\frac{\left[\Hom\left({\bf C}^d,{\bf C}^{kd}\right)^m_{\rm sst}\right]_{\rm vir}}{\left[\GL_d(\mathbb{C})\times\GL_{kd}(\mathbb{C})\right]_{\rm vir}}t^d,\\
F^{(k)}(t)&=&1+\sum_{d\geq 1}\left[K_{d,kd}^{(m),{\rm fr}}\right]_{\rm vir}t^d,\\
G^{(k),\pm}(t)&=&1+\sum_{d\geq 1}\left[K_{d,kd\pm 1}^{(m)}\right]_{\rm vir}t^d.\end{eqnarray*}

To shorten notation, we will abbreviate $-\mathbb{L}^{1/2}$ to $v$.

We introduce an (ad hoc) operator $\nabla^{(k)}$ on formal series defined by
$$\nabla^{(k)}B(t)=\frac{vB(v^kt)-v^{-1}B(v^{-k}t)}{v-v^{-1}}.$$
It is linear and fulfills $\nabla^{(k)}t^d=[\mathbb{P}^{kd}]_{\rm vir}t^d$.
For a series $B(t)$ with constant term $B(0)=1$, we have
$$t\nabla^{(1)}\frac{B(t)-1}{t}=\Delta B(t)$$
for the operator $\Delta$ defined by
$$\Delta B(t)=\frac{B(vt)-B(v^{-1}t)}{v-v^{-1}}$$
(we note for later use that $\frac{1}{t}\Delta B(t)$ specializes to the standard derivative $B'(t)$ at $v=1$).

We can now specialize the generating series identities of the preceding section, and interpret the isomorphism of framed moduli spaces of Theorem~\ref{reflectionframed} as an identity of generating series. 

\begin{corollary}\label{corident}
We have the following identities:
\begin{eqnarray*}
A^{(k)}(t)&=&A^{(m-k)}(t),\\
F^{(k)}(t)&=&\frac{A^{(k)}(v^kt)}{A^{(k)}(v^{-k}t)},\\
G^{(k),-}(t)&=&G^{(m-k),+}(t),\\
F^{(k)}(t)&=&\nabla^{(m-k)}G^{(m-k),+}(t).
\end{eqnarray*}
\end{corollary}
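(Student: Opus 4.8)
The plan is to derive each of the four identities in Corollary~\ref{corident} from the corresponding geometric input established earlier.

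\emph{First identity.} This is the generating-series shadow of Corollary~\ref{drk}. Applying $K_{d,e}^{(m)}\simeq K_{md-e,d}^{(m)}$ with $e=kd$ gives $K_{d,kd}^{(m)}\simeq K_{(m-k)d,d}^{(m)}$, and then $K_{d,e}^{(m)}\simeq K_{e,d}^{(m)}$ converts this to $K_{d,kd}^{(m)}\simeq K_{d,(m-k)d}^{(m)}$. Since $A^{(k)}(t)$ is, by the identification in Section~\ref{subsec:framed} (or rather directly from the definition of $A_s^\mu$ specialized to the Kronecker quiver with slope $s=k/(k+1)$), the generating series whose $d$-th coefficient is $[\Hom(\mathbb{C}^d,\mathbb{C}^{kd})^m_{\rm sst}]_{\rm vir}/[\GL_d\times\GL_{kd}]_{\rm vir}$, I would note that these motives are $\GL\times\GL$-equivariant stack motives, hence unchanged under the isomorphism; comparing coefficients term by term yields $A^{(k)}(t)=A^{(m-k)}(t)$. (One must check that the dualities of Corollary~\ref{drk} lift to the semistable \emph{loci}, not just the moduli spaces — but Lemma~\ref{duality} and Theorem~\ref{reflection} already give equalities of the localized motives $[R^{\rm sst}]/[G]$, so this is automatic.)

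\emph{Second identity.} This is a direct specialization of Theorem~\ref{theoremframed}\eqref{tf-2}. For the Kronecker quiver with framing vector ${\bf n}={\bf j}$ and slope $s=k/(k+1)$, the coefficient of $x^{d{\bf i}+kd{\bf j}}$ in $A_s^{\Theta\text{-}{\rm fr}}(x)$ is $[K_{d,kd}^{(m),{\rm fr}}]_{\rm vir}$, so that series specializes to $F^{(k)}(t)$ after setting $x^{d{\bf i}+kd{\bf j}}\mapsto t^d$ (the twisting of the quantum affine space contributes trivially along the ray $\mathbb{N}(d{\bf i}+kd{\bf j})$ since $\{{\bf d},{\bf e}\}=0$ for ${\bf d},{\bf e}$ on a common ray). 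The substitution $x\mapsto(-\mathbb{L}^{1/2})^{\pm{\bf n}}x$ with ${\bf n}={\bf j}$ multiplies the coefficient of $t^d$ by $v^{\pm kd}$, i.e.\ replaces $t$ by $v^{\pm k}t$; hence Theorem~\ref{theoremframed}\eqref{tf-2} reads $F^{(k)}(t)=A^{(k)}(v^kt)\cdot A^{(k)}(v^{-k}t)^{-1}$, which is the claimed identity.

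\emph{Third and fourth identities.} For the third, Corollary~\ref{drk} again: $K_{d,kd-1}^{(m)}\simeq K_{md-(kd-1),d}^{(m)}=K_{(m-k)d+1,d}^{(m)}\simeq K_{d,(m-k)d+1}^{(m)}$, valid whenever $kd-1\le md$ (always true for $k\le m$, $d\ge1$) and $(m-k)d+1\le md$ (true for $k\ge0$), so the $d$-th coefficients of $G^{(k),-}$ and $G^{(m-k),+}$ agree. For the fourth identity, I would combine Theorem~\ref{reflectionframed} with Theorem~\ref{theoremframed}\eqref{tf-1}: by Theorem~\ref{reflectionframed}, $K_{d,kd}^{(m),{\rm fr}}\simeq K_{d,(m-k)d+1}^{(m),{\rm fr}}$, and since $(m-k)d+1$ makes the dimension vector $d{\bf i}+((m-k)d+1){\bf j}$ coprime (its gcd is $\gcd(d,(m-k)d+1)=1$), the forgetful map $p$ from $K_{d,(m-k)d+1}^{(m),{\rm fr}}$ to $K_{d,(m-k)d+1}^{(m)}$ is, by Theorem~\ref{theoremframed}\eqref{tf-1}, the projective bundle $\mathbb{P}(\mathcal{W})$ with $\mathcal{W}$ the universal bundle for the framing vertex (here ${\bf n}={\bf j}$), of rank $(m-k)d+1$. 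A projective-bundle motive satisfies $[\mathbb{P}(\mathcal{W})]_{\rm vir}=[\mathbb{P}^{{\rm rk}\,\mathcal{W}-1}]_{\rm vir}\cdot[\text{base}]_{\rm vir}$ in the localized ring, so
\begin{equation*}
\left[K_{d,kd}^{(m),{\rm fr}}\right]_{\rm vir}=\left[\mathbb{P}^{(m-k)d}\right]_{\rm vir}\cdot\left[K_{d,(m-k)d+1}^{(m)}\right]_{\rm vir}.
\end{equation*}
Multiplying by $t^d$, summing over $d\ge1$, adding the constant term $1$, and recalling that $\nabla^{(m-k)}$ is linear with $\nabla^{(m-k)}t^d=[\mathbb{P}^{(m-k)d}]_{\rm vir}t^d$ and $\nabla^{(m-k)}1=1$, the right-hand side is exactly $\nabla^{(m-k)}G^{(m-k),+}(t)$.

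The routine bookkeeping — checking the coprimality, the gcd, and the slope/ray conventions so that the specializations $x^{\bf d}\mapsto t^d$ are well defined and compatible with the $v^{\pm}$-shifts — is mechanical. The one point that genuinely requires care, and which I would treat as the main obstacle, is the fourth identity: one must be sure that the projective bundle in Theorem~\ref{theoremframed}\eqref{tf-1} for the framing datum ${\bf n}={\bf j}$ at the \emph{second} vertex has rank equal to $(m-k)d+1$ (the dimension at vertex $j$) rather than something involving both components, and that the Zariski-local triviality (not merely a motivic relation) is what lets us write the motive as a product in the localized ring without any correction term. Everything else then falls out by combining the already-established isomorphisms with the product formula of Theorem~\ref{theoremframed}\eqref{tf-2}.
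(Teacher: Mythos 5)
Your proof is correct and follows essentially the same route as the paper's, which simply cites the equality of motives in Theorem~\ref{reflection} for the first and third identities, part~(\ref{tf-2}) of Theorem~\ref{theoremframed} for the second, and (implicitly together with Theorem~\ref{reflectionframed}) the projective-bundle statement in the first part of Theorem~\ref{theoremframed} for the fourth. Your expanded bookkeeping --- in particular that the first identity really needs the equality of localized motives of semistable loci rather than just the isomorphisms of Corollary~\ref{drk}, and that the fourth identity combines the framed reflection isomorphism with the rank-$((m-k)d+1)$ projective bundle over the coprime moduli space --- just makes explicit what the paper leaves implicit.
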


\begin{proof} The first identity follows from the equality of motives in Theorem~\ref{reflection}. The second identity is a special case of the identity in Theorem~\ref{theoremframed}\eqref{tf-2}. The third identity follows again from Theorem~\ref{reflection}. The fourth identity follows from the first part of Theorem~\ref{theoremframed}.  
\end{proof}

We now combine all these identities. Because of the first and second identity, we find
\begin{align}\label{910}
  \prod_{i=1}^{m-k}F^{(k)}\left(v^{(m+1-k-2i)k}t\right)&=\frac{A^{(k)}\left(v^{(m-k)k}t\right)}{A^{(k)}\left(v^{-(m-k)k}t\right)}\\
 & =\frac{A^{(m-k)}\left(v^{(m-k)k}t\right)}{A^{(m-k)}\left(v^{-(m-k)k}t\right)}=\prod_{i=1}^kF^{(m-k)}\left(v^{(m-k)(k+1-2i)}t\right).\nonumber
\end{align}

Using the fourth identity, this equality reads

$$\prod_{i=1}^{m-k}\nabla^{(m-k)}G^{(m-k),+}\left(v^{(m+1-k-2i)k}t\right)=\prod_{i=1}^k\nabla^{(k)}G^{(k),+}\left(v^{(m-k)(k+1-2i)}t\right).$$

Using the third identity on the left-hand side, we arrive at the following identity between generating series attached to Kronecker moduli of slopes $k+1/d$ and $k-1/d$, respectively. 

\begin{corollary}\label{newduality}
We have
$$\prod_{i=1}^{m-k}\nabla^{(m-k)}G^{(k),-}\left(v^{(m+1-k-2i)k}t\right)=\prod_{i=1}^k\nabla^{(k)}G^{(k),+}\left(v^{(m-k)(k+1-2i)}t\right).$$
\end{corollary}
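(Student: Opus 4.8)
The plan is straightforward once the preceding corollaries are in place, since Corollary~\ref{newduality} is essentially a formal rewriting of equation~\eqref{910}. First I would start from the displayed identity \eqref{910}, namely
$$\prod_{i=1}^{m-k}F^{(k)}\left(v^{(m+1-k-2i)k}t\right)=\prod_{i=1}^kF^{(m-k)}\left(v^{(m-k)(k+1-2i)}t\right),$$
which is itself obtained by telescoping the second identity of Corollary~\ref{corident} (the product of $A^{(k)}(v^{k}\cdot)/A^{(k)}(v^{-k}\cdot)$ over a shifted arithmetic progression collapses to $A^{(k)}(v^{(m-k)k}t)/A^{(k)}(v^{-(m-k)k}t)$) together with the first identity $A^{(k)}=A^{(m-k)}$ applied in the middle. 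I would treat \eqref{910} as given, since it is displayed in the excerpt before the statement.

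Next I would substitute the fourth identity of Corollary~\ref{corident}, $F^{(\ell)}(t)=\nabla^{(m-\ell)}G^{(m-\ell),+}(t)$, into both sides of \eqref{910}. On the left-hand side, with $\ell=k$, this turns each factor $F^{(k)}(v^{(m+1-k-2i)k}t)$ into $\nabla^{(m-k)}G^{(m-k),+}(v^{(m+1-k-2i)k}t)$. On the right-hand side, with $\ell=m-k$, each factor $F^{(m-k)}(v^{(m-k)(k+1-2i)}t)$ becomes $\nabla^{(k)}G^{(k),+}(v^{(m-k)(k+1-2i)}t)$. This yields exactly the intermediate identity displayed in the excerpt,
$$\prod_{i=1}^{m-k}\nabla^{(m-k)}G^{(m-k),+}\left(v^{(m+1-k-2i)k}t\right)=\prod_{i=1}^k\nabla^{(k)}G^{(k),+}\left(v^{(m-k)(k+1-2i)}t\right).$$
One minor point to check here is that substituting a scalar dilation $t\mapsto v^a t$ commutes appropriately with the operator $\nabla^{(m-k)}$ in the sense that $\nabla^{(m-k)}(B(v^a t)) = (\nabla^{(m-k)}B)(v^a t)$; this is immediate from the definition $\nabla^{(k)}B(t)=\bigl(vB(v^kt)-v^{-1}B(v^{-k}t)\bigr)/(v-v^{-1})$, since both terms are evaluated at rescalings of $t$ and the scalar $v^a$ simply passes through.

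Finally I would apply the third identity of Corollary~\ref{corident}, $G^{(k),-}(t)=G^{(m-k),+}(t)$, to the left-hand side of the intermediate identity: replacing $G^{(m-k),+}$ by $G^{(k),-}$ (which is the same series) in each of the $m-k$ factors gives
$$\prod_{i=1}^{m-k}\nabla^{(m-k)}G^{(k),-}\left(v^{(m+1-k-2i)k}t\right)=\prod_{i=1}^k\nabla^{(k)}G^{(k),+}\left(v^{(m-k)(k+1-2i)}t\right),$$
which is precisely the assertion of Corollary~\ref{newduality}. I do not expect any genuine obstacle: the whole argument is a bookkeeping chain combining the four identities of Corollary~\ref{corident} in the order (2)$\to$(1)$\to$(4)$\to$(3), with the only thing to be careful about being the exponents of $v$ in the telescoping step of \eqref{910} and the commutation of $\nabla^{(\cdot)}$ with dilations. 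If anything is delicate, it is making sure the product ranges and the arithmetic progressions of exponents match up on both sides after the telescoping; but since \eqref{910} is already established in the excerpt, this reduces to a pure substitution.
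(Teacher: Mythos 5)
Your proposal is correct and follows essentially the same route as the paper: telescoping the second identity of Corollary~\ref{corident} and inserting the first to get~\eqref{910}, then substituting the fourth identity on both sides and the third on the left, with the (correct) observation that $\nabla^{(\ell)}$ commutes with the dilations $t\mapsto v^at$. No gaps.
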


\section{Central slope}\label{sec5}

Now we specialize to the case $k=1$ in the identities of Corollary~\ref{corident}; note that this corresponds to the case of dimension vectors of slope~$1$ (which we call the central slope), thus $d=e$. We note the additional identity  
\begin{equation}\label{new}
  G^{(1),+}(t)=\frac{G^{(1),-}(t)-1}{t},
\end{equation}
which follows from Lemma~\ref{duality}. We then abbreviate $$F(t):=F^{(1)}(t)\quad\text{and}\quad G(t):=G^{(1),-}(t).$$ Then, by the third and fourth identities of Corollary~\ref{corident},

$$F(t)=\nabla^{(m-1)}G^{(m-1),+}(t)=\nabla^{(m-1)}G(t)$$
and, using again the fourth identity of Corollary~\ref{corident}, together with Equations (\ref{910}) and (\ref{new}), we obtain 
$$\prod_{i=1}^{m-1}F(v^{m-2i}t)=F^{(m-1)}(t)=\nabla^{(1)}G^{(1),+}(t)=\nabla^{(1)}\frac{G(t)-1}{t}=\frac{1}{t}\Delta G(t).$$

We thus arrive at our main result. 

\begin{theorem}\label{main}
The series $F(t)$ and $G(t)$, encoding the motives of framed Kronecker moduli of slope $1$ and the motives of Kronecker moduli of slope $1-1/d$, respectively, are mutually determined by
$$F(t)=\nabla^{(m-1)}G(t)\quad\text{and}\quad\Delta G(t)=t\prod_{i=1}^{m-1}F\left(v^{m-2i}t\right).$$
\end{theorem}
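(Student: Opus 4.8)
The plan is to assemble the two displayed equations directly from the identities already collected in Corollary~\ref{corident} together with the auxiliary relation~(\ref{new}), which is the only genuinely new input specific to the central slope. Setting $k=1$ throughout, the first equation $F(t)=\nabla^{(m-1)}G(t)$ is essentially immediate: the fourth identity of Corollary~\ref{corident} reads $F^{(1)}(t)=\nabla^{(m-1)}G^{(m-1),+}(t)$, and by the third identity $G^{(m-1),+}(t)=G^{(1),-}(t)=G(t)$, so with the abbreviations $F=F^{(1)}$, $G=G^{(1),-}(t)$ there is nothing left to prove. The substance is the second equation, and here I would chain together: (a) the fourth identity of Corollary~\ref{corident} in the form $F^{(m-1)}(t)=\nabla^{(1)}G^{(1),+}(t)$; (b) the relation~(\ref{new}), $G^{(1),+}(t)=\frac{G(t)-1}{t}$, together with the bookkeeping identity $t\nabla^{(1)}\frac{B(t)-1}{t}=\Delta B(t)$ recorded just before Corollary~\ref{corident}, which turns $\nabla^{(1)}G^{(1),+}(t)$ into $\frac{1}{t}\Delta G(t)$; and (c) the product expansion~(\ref{910}) with $k=1$, which gives $\prod_{i=1}^{m-1}F^{(1)}\bigl(v^{m-2i}t\bigr)=F^{(m-1)}(t)$ once one substitutes $k=1$ into the exponents $v^{(m+1-k-2i)k}$ and $v^{(m-k)(k+1-2i)}$ and notes that the right-hand product has a single factor $F^{(m-1)}(t)$ (since the exponent $v^{(m-1)(2-2i)}$ vanishes at the only index $i=1$). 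Concatenating (c), (a), (b) yields $\prod_{i=1}^{m-1}F(v^{m-2i}t)=\frac{1}{t}\Delta G(t)$, which is the claim after multiplying through by $t$.

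Concretely, I would write the proof as a short computation. First state that both series have constant term $1$, so that~(\ref{new}) and the $\Delta$/$\nabla^{(1)}$ bookkeeping apply. Then, for the first equation, invoke the third and fourth identities of Corollary~\ref{corident} at $k=1$ and $k=m-1$ as above. For the second equation, begin from~(\ref{910}) specialized at $k=1$: the left-hand side becomes $\prod_{i=1}^{m-1}F^{(1)}(v^{m-2i}t)$ and the right-hand side becomes the single-factor product $F^{(m-1)}(t)$, because the exponent on the right, $(m-k)(k+1-2i)$, equals $(m-1)(2-2i)$, which is $0$ precisely at $i=1$ (the range of the right-hand product is $1\le i\le k=1$). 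Next, apply the fourth identity of Corollary~\ref{corident} with roles $m-k=1$, i.e. $F^{(m-1)}(t)=\nabla^{(1)}G^{(m-1),+}(t)$ — but here I must be careful: the fourth identity is $F^{(k)}=\nabla^{(m-k)}G^{(m-k),+}$, so taking $k=m-1$ gives $F^{(m-1)}(t)=\nabla^{(1)}G^{(1),+}(t)$, which is exactly what is wanted. Finally substitute~(\ref{new}) and use $t\nabla^{(1)}\frac{B(t)-1}{t}=\Delta B(t)$ with $B=G$ to get $tF^{(m-1)}(t)=t\nabla^{(1)}\frac{G(t)-1}{t}=\Delta G(t)$, and combine with the product identity to conclude.

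The only place demanding real care — and hence the main obstacle — is the exponent bookkeeping in~(\ref{910}) under the specialization $k=1$, together with confirming that the right-hand product genuinely collapses to a single factor $F^{(m-1)}(t)$ rather than, say, a shifted version of it; one must check that the index set $\{i : 1\le i\le 1\}$ forces the argument of that unique factor to be exactly $t$, so that it is literally $F^{(m-1)}(t)$ and not $F^{(m-1)}(v^{c}t)$ for some nonzero $c$. A secondary subtlety is making sure the two conventions — the $\nabla^{(1)}$ acting on $G^{(1),+}$ versus the $\frac{1}{t}\Delta$ acting on $G$ — are being reconciled using exactly the identity $t\nabla^{(1)}\frac{B(t)-1}{t}=\Delta B(t)$ as stated, i.e. that $G^{(1),+}(t)=\frac{G(t)-1}{t}$ from~(\ref{new}) is plugged in at the right moment. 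None of this is hard, but it is the kind of index arithmetic where a sign or an off-by-one in the shift would invalidate the whole $q$-difference equation, so I would display each intermediate step rather than merge them.
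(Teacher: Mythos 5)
Your proposal is correct and follows essentially the same route as the paper: the first equation from the third and fourth identities of Corollary~\ref{corident} at $k=1$, and the second from equation~(\ref{910}) at $k=1$ combined with the fourth identity at $k=m-1$, the relation~(\ref{new}), and the bookkeeping identity $t\nabla^{(1)}\frac{B(t)-1}{t}=\Delta B(t)$. The exponent checks you flag (the right-hand product of~(\ref{910}) collapsing to the single factor $F^{(m-1)}(t)$ with argument exactly $t$) work out as you describe.
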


It is easily verified that the operators $\Delta$ and $\nabla^{(k)}$ commute, from which we immediately derive the following  $v$-difference equation determining $F(t)$. 

\begin{corollary}\label{vdifference}
The series $F(t)$ is determined by
$$\Delta F(t)=\nabla^{(m-1)}\left(t\prod_{i=1}^{m-1}F(v^{m-2i}t)\right)$$
 together with the initial condition $F(0)=1$.
 \end{corollary}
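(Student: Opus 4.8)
The plan is to derive the asserted $v$-difference equation as a purely formal consequence of Theorem~\ref{main}, eliminating $G(t)$, and then to check that the equation together with $F(0)=1$ determines every coefficient of $F(t)$. Concretely, I would start from the two relations $F(t)=\nabla^{(m-1)}G(t)$ and $\Delta G(t)=t\prod_{i=1}^{m-1}F(v^{m-2i}t)$ of Theorem~\ref{main}, apply $\Delta$ to the first one, and commute $\Delta$ past $\nabla^{(m-1)}$:
$$\Delta F(t)=\Delta\nabla^{(m-1)}G(t)=\nabla^{(m-1)}\Delta G(t)=\nabla^{(m-1)}\left(t\prod_{i=1}^{m-1}F\left(v^{m-2i}t\right)\right),$$
which is exactly the claimed identity, now involving $F$ alone.

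The commutativity of $\Delta$ and $\nabla^{(m-1)}$ used in the middle step is the ``easily verified'' ingredient: writing $D_a$ for the scaling operator $B(t)\mapsto B(at)$, one has $\Delta=(v-v^{-1})^{-1}(D_v-D_{v^{-1}})$ and $\nabla^{(k)}=(v-v^{-1})^{-1}(vD_{v^k}-v^{-1}D_{v^{-k}})$; since $D_aD_b=D_{ab}=D_bD_a$ for all $a,b$, any two operators of this shape commute. (In particular $\Delta$ commutes with $\nabla^{(k)}$ for every $k$.)

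It then remains to justify the word ``determined''. I would compare coefficients of $t^d$ after writing $F(t)=1+\sum_{d\geq1}c_dt^d$. On the left, $\Delta t^d=[d]_v t^d$ with $[d]_v=(v^d-v^{-d})/(v-v^{-1})$, so the $t^d$-coefficient of $\Delta F(t)$ is $[d]_v c_d$; here $[d]_v$ is a non-zero element of $\mathbb{Q}(v)$, equivalently a unit of $R$ (up to a power of $v$ it equals $(1-\mathbb{L}^d)/(1-\mathbb{L})$, and both $1-\mathbb{L}^d$ and $1-\mathbb{L}$ are inverted in $R$). On the right, the product $\prod_{i=1}^{m-1}F(v^{m-2i}t)$ has constant term $1$, so the extra factor $t$ makes the $t^d$-coefficient of $t\prod_{i=1}^{m-1}F(v^{m-2i}t)$ a polynomial in $c_1,\dots,c_{d-1}$ with coefficients in $\mathbb{Q}(v)$, and $\nabla^{(m-1)}$ only rescales this $t^d$-coefficient by the scalar $[\mathbb{P}^{(m-1)d}]_{\rm vir}$. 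Hence the equation reads $[d]_v c_d=(\text{explicit polynomial in }c_1,\dots,c_{d-1})$ for each $d\geq1$, and together with $c_0=1$ this recursively and uniquely determines every $c_d$, so $F(t)$ is uniquely determined.

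I do not expect a serious obstacle: Corollary~\ref{vdifference} is a formal repackaging of Theorem~\ref{main}, and all the geometric input has already been spent in Theorem~\ref{reflectionframed} and the wall-crossing and framing identities of Section~\ref{sec2}. The only mild subtlety is the uniqueness claim, namely checking that the ``quantum integers'' $[d]_v$ appearing on the left-hand side are invertible in the coefficient ring, so that the recursion can actually be solved.
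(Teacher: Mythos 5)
Your proposal is correct and follows the paper's own route exactly: the paper derives the corollary from Theorem~\ref{main} by applying $\Delta$ to $F(t)=\nabla^{(m-1)}G(t)$ and using the (easily verified) commutativity of $\Delta$ and $\nabla^{(m-1)}$, which you prove the same way via commuting scaling operators. Your additional check that the recursion is solvable because $[d]_v=[\mathbb{P}^{d-1}]_{\rm vir}$ is invertible in $R$ is exactly what the paper leaves implicit and then makes explicit in Corollary~\ref{practical}.
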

 
 Taking the definition of $F(t)$ as a generating series of virtual motives and comparing coefficients in the previous formula, we find the following recursion which allows for practical calculations. 
 
 \begin{corollary}\label{practical}
The virtual motives $m_d=[K_{d,d}^{(m),{\rm fr}}]_{\rm vir}$ are given recursively by $m_0=1$ and
 $$m_d=\frac{\left[\mathbb{P}^{(m-1)d}\right]_{\rm vir}}{\left[\mathbb{P}^{d-1}\right]_{\rm vir}}\sum_{d_1+\cdots+d_{m-1}=d-1}v^{\sum_i(m-2i)d_i}\prod_im_{d_i}.$$
 \end{corollary}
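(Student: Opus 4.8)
## Proof Proposal

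The plan is to extract the recursion for the $m_d$ directly from the $v$-difference equation of Corollary~\ref{vdifference} by comparing coefficients of $t^d$ on both sides, using the explicit eigenvalue behavior of the operators $\Delta$ and $\nabla^{(k)}$ on monomials. The key observation is that both operators act diagonally on the monomial basis $\{t^d\}$: we have $\nabla^{(k)}t^d=[\mathbb{P}^{kd}]_{\rm vir}t^d$ (noted in Section~\ref{sec4}), and likewise $\Delta t^d=\frac{v^d-v^{-d}}{v-v^{-1}}t^d=[\mathbb{P}^{d-1}]_{\rm vir}t^d$. So the equation $\Delta F(t)=\nabla^{(m-1)}(t\prod_{i=1}^{m-1}F(v^{m-2i}t))$ becomes, coefficient by coefficient, a relation in which $[\mathbb{P}^{d-1}]_{\rm vir}$ multiplies $m_d$ on the left and $[\mathbb{P}^{(m-1)d}]_{\rm vir}$ appears on the right (the $\nabla^{(m-1)}$ of a degree-$d$ term, since the argument $t\prod F(\dots)$ has its degree-$d$ part coming from $t$ times the degree-$(d-1)$ part of the product).

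First I would expand the product $\prod_{i=1}^{m-1}F(v^{m-2i}t)$ and isolate its coefficient of $t^{d-1}$. Writing $F(v^{m-2i}t)=1+\sum_{d_i\geq 1}v^{(m-2i)d_i}m_{d_i}t^{d_i}$, the coefficient of $t^{d-1}$ in the product is exactly $\sum_{d_1+\cdots+d_{m-1}=d-1}\prod_{i=1}^{m-1}v^{(m-2i)d_i}m_{d_i}=\sum_{d_1+\cdots+d_{m-1}=d-1}v^{\sum_i(m-2i)d_i}\prod_i m_{d_i}$, where now $d_i\geq 0$ are allowed (a factor $1$ contributes $d_i=0$, with $m_0=1$, consistent with the stated sum). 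Multiplying by $t$ shifts this to the coefficient of $t^d$, and applying $\nabla^{(m-1)}$ multiplies it by $[\mathbb{P}^{(m-1)d}]_{\rm vir}$. Comparing with the left-hand side, where the coefficient of $t^d$ in $\Delta F(t)$ is $[\mathbb{P}^{d-1}]_{\rm vir}m_d$, yields
$$[\mathbb{P}^{d-1}]_{\rm vir}\,m_d=[\mathbb{P}^{(m-1)d}]_{\rm vir}\sum_{d_1+\cdots+d_{m-1}=d-1}v^{\sum_i(m-2i)d_i}\prod_i m_{d_i},$$
and dividing through by $[\mathbb{P}^{d-1}]_{\rm vir}$ (a unit in $R$) gives the claimed formula. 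The initial condition $m_0=1$ is the statement $F(0)=1$.

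The only point requiring care is the bookkeeping of which compositions $d_1+\cdots+d_{m-1}=d-1$ enter the sum and whether $d_i=0$ is permitted: one must check that the degree-$d$ coefficient of $t\prod_i F(v^{m-2i}t)$ genuinely involves the degree-$(d-1)$ part of the \emph{full} product (including all mixed terms where some $F$-factors contribute $1$), and that $m_0=1$ makes the stated sum agree with this. I expect this to be routine once the eigenvalue computation $\Delta t^d=[\mathbb{P}^{d-1}]_{\rm vir}t^d$ is recorded. One should also note that the recursion determines all $m_d$ uniquely from $m_0=1$, since the sum on the right involves only $m_{d_i}$ with $d_i\leq d-1$; this, together with Corollary~\ref{vdifference}, is what licenses the word ``recursively.''
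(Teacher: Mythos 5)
Your proposal is correct and matches the paper's argument: the paper derives Corollary~\ref{practical} precisely by comparing coefficients of $t^d$ in the $v$-difference equation of Corollary~\ref{vdifference}, using that $\Delta$ and $\nabla^{(k)}$ act diagonally on monomials with eigenvalues $[\mathbb{P}^{d-1}]_{\rm vir}$ and $[\mathbb{P}^{kd}]_{\rm vir}$, respectively. Your additional checks (that $d_i=0$ terms are accounted for by $m_0=1$, and that $[\mathbb{P}^{d-1}]_{\rm vir}$ is invertible in $R$) are exactly the routine bookkeeping the paper leaves implicit.
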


From Corollary~\ref{vdifference}, we will now derive an algebraic functional equation defining $F(t)$. 

\begin{theorem}
The series $F(t)$ is determined by $F(0)=1$ and
$$F(t)=\prod_{i=1}^m\left(1-v^{2i-m-1}t\prod_{j=1}^{m-2}F\left(v^{2i-2j-2}t\right)\right)^{-1}.$$
\end{theorem}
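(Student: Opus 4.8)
The plan is to deduce this algebraic functional equation from the $v$-difference equation of Corollary~\ref{vdifference}, namely $\Delta F(t)=\nabla^{(m-1)}\left(t\prod_{i=1}^{m-1}F(v^{m-2i}t)\right)$, by "integrating" the operator $\Delta$. First I would unwind the definitions: writing $H(t)=t\prod_{i=1}^{m-1}F(v^{m-2i}t)$, the equation of Corollary~\ref{vdifference} says $\Delta F(t)=\nabla^{(m-1)}H(t)$, i.e.
$$\frac{F(vt)-F(v^{-1}t)}{v-v^{-1}}=\frac{vH(v^{m-1}t)-v^{-1}H(v^{-(m-1)}t)}{v-v^{-1}},$$
which after clearing denominators and substituting $t\mapsto v^{-1}t$ (to recentre) becomes a two-term relation of the form $F(t)-F(v^{-2}t)=vH(v^{m-2}t)-v^{-1}H(v^{-m}t)$. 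The key observation is that since $F(0)=1$ and all series lie in $R[[t]]$, such a telescoping relation $F(t)-F(v^{-2}t)=(\text{something})$ can be solved by iterating $t\mapsto v^{-2}t$ and summing a geometric-type series; this should collapse to a closed identity because the right-hand side is itself essentially a difference of the same shifted quantity.

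The second main step is to recognise the product structure on the right-hand side of the target equation. Expanding the claimed identity, $F(t)^{-1}=\prod_{i=1}^m\left(1-v^{2i-m-1}t\prod_{j=1}^{m-2}F(v^{2i-2j-2}t)\right)$, I would set $P_i(t)=1-v^{2i-m-1}t\prod_{j=1}^{m-2}F(v^{2i-2j-2}t)$ and note that the exponents $2i-2j-2$ for $j=1,\dots,m-2$ run through an arithmetic progression, so that the inner product $\prod_{j=1}^{m-2}F(v^{2i-2j-2}t)$ is exactly a shifted copy of the product $\prod_{i=1}^{m-1}F(v^{m-2i}t)$ appearing in $H$; more precisely $\prod_{j=1}^{m-2}F(v^{2i-2j-2}t)$ has one fewer factor, reflecting the passage from $m-1$ factors (the $F^{(m-1)}$ side) to $m-2$ factors. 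I expect that consecutive factors $P_i$ and $P_{i+1}$ are related by the substitution $t\mapsto v^2 t$ up to the missing boundary factor of $F$, so that $\prod_{i=1}^m P_i$ telescopes against the difference relation derived in the first step. Concretely, one shows $P_i(t)\cdot F(v^{2i-m-1}\cdot\text{shift}\,t)$ factors telescopically, turning $\prod P_i$ into $F(\text{bottom shift})/F(\text{top shift})$ or similar, which must then be matched against the iterated $\Delta$-relation.

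The hard part will be bookkeeping the shifts so that the telescoping is exact: one must verify that the specific exponents $2i-m-1$ and $2i-2j-2$ are forced, and that the product over $i=1,\dots,m$ of the linear factors $P_i$, when multiplied out against $F(t)$, reproduces precisely the two-term consequence of Corollary~\ref{vdifference} after summing the geometric series. I would handle this by introducing an auxiliary series, say $\Psi(t)=F(t)\prod_{i=1}^m P_i(t)$, proving $\Psi(0)=1$, and then showing $\Psi(t)-\Psi(v^{-2}t)=0$ using the $v$-difference equation together with the definition of $H$; since $\Psi\in R[[t]]$ and $v$ is not a root of unity, $\Psi(t)=\Psi(v^{-2}t)$ forces $\Psi(t)=1$, which is the claim. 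The delicate point is establishing that single vanishing difference, which amounts to matching the telescoped product $\prod_i P_i - \prod_i P_i|_{t\mapsto v^{-2}t}$ (a sum of $m$ terms by a discrete product rule) with $\Delta$ applied to $F^{-1}$, i.e.\ with the expression $v H(v^{m-2}t)-v^{-1}H(v^{-m}t)$ rewritten via $H=t\prod_{i=1}^{m-1}F(v^{m-2i}t)$; the arithmetic-progression structure of the exponents is exactly what makes this combinatorial identity of shifted products hold, and carrying it out is the one genuinely computational obstacle in the argument.
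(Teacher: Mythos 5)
Your proposal follows essentially the same route as the paper: the paper also forms the quotient of $F$ with the claimed product (its $H(t)/F(t)$ is the reciprocal of your $\Psi$), exploits exactly the telescoping $P_{i+1}(t)=P_i(v^2t)$ so that the $v$-shifted products differ only in the boundary factors $P_0$ and $P_m$, invokes Corollary~\ref{vdifference} to identify that boundary difference with $\Delta F(t)$, and concludes from $\Delta(H/F)=0$ and the constant term. The single step you defer as ``the one genuinely computational obstacle'' is handled in the paper by multiplying through by $F(vt)$ to complete the $(m-2)$-fold products to the $(m-1)$-fold products of Corollary~\ref{vdifference} and then adding and subtracting one term, the correction being precisely $(T_0-1)\cdot(F(vt)-F(v^{-1}t))$; note also that, as your second paragraph already observes, the difference of shifted products collapses to this single boundary term rather than the $m$-term discrete Leibniz expansion mentioned later in your write-up.
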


\begin{proof}
We abbreviate the right-hand side to $H(t)$; thus we have to prove that $H(t)=F(t)$. We obviously have $H(0)=1$, and we will now consider $\Delta H(t)$.

We abbreviate $$T_i:=1-v^{2i-m}t\prod_{j=1}^{m-2}F\left(v^{2i-2j-1}t\right)$$ for $i\geq 0$. Then $$H(vt)=T_1^{-1}\cdot\ldots\cdot T_m^{-1}\quad\text{and}\quad H(v^{-1}t)=T_0^{-1}\cdot\ldots \cdot T_{m-1}^{-1},$$
and thus
\begin{align*}
\Delta H(t)&=\frac{1}{v-v^{-1}}\left(T_1^{-1}\cdot\ldots\cdot T_m^{-1}-T_0^{-1}\cdot\ldots\cdot T_{m-1}^{-1}\right)\\
&=\frac{1}{v-v^{-1}}\cdot\frac{(T_0-T_m)\cdot T_1\cdot\ldots\cdot T_{m-1}}{T_0\cdot\ldots\cdot T_{m-1}\cdot T_1\cdot\ldots\cdot T_{m}}\\
&=\frac{H(vt)\cdot(T_0-T_m)}{(v-v^{-1})\cdot T_0}.
\end{align*}
Since
$$T_0=1-v^{-m}t\prod_{j=1}^{m-2}F\left(v^{-2j-1}t\right),\quad T_m=1-v^m\prod_{j=1}^{m-2}F\left(v^{2m-1-2j}t\right),$$
we have
\begin{equation*}
  \begin{aligned}
\Delta H(t)&=\frac{H(vt)}{(v-v^{-1})\cdot T_0}\left(v^mt\prod_{j=1}^{m-2}F\left(v^{2m-1-2j}t\right)-v^{-m}t\prod_{j=1}^{m-2}F\left(v^{-2j-1}t\right)\right)\\
&=\frac{H(vt)}{F(vt)\cdot(v-v^{-1})\cdot T_0}\left(v^mt\prod_{j=1}^{m-1}F\left(v^{2m-1-2j}t\right)-v^{-m}t\prod_{j=1}^{m-2}F\left(v^{-2j-1}t\right)\cdot F(vt)\right)\\
&=\frac{H(vt)}{F(vt)\cdot(v-v^{-1})\cdot T_0}
\begin{aligned}[t]
&\Bigg(\underbrace{v^mt\prod_{j=1}^{m-1}F\left(v^{2m-1-2j}t\right)-v^{-m}t\prod_{j=1}^{m-1}F\left(v^{-2j+1}t\right)}_{\mbox{(I)}}\\
&\hphantom{(}+\underbrace{v^{-m}t\prod_{j=1}^{m-1}F\left(v^{-2j+1}t\right)-v^{-m}t\prod_{j=1}^{m-2}F\left(v^{-2j-1}t\right)\cdot F(vt)}_{\mbox{(II)}}\Bigg).\end{aligned}
  \end{aligned}
  \end{equation*}
The right-hand side of the $v$-difference equation defining $F(t)$ in Corollary~\ref{vdifference}  reads
$$\nabla^{(m-1)}\left(t\prod_{j=1}^{m-1}F\left(v^{m-2j}t\right)\right)=\frac{v^mt\prod_{j=1}^{m-1}F\left(v^{2m-1-2j}t\right)-v^{-m}t\prod_{j=1}^{m-1}F\left(v^{1-2j}t\right)}{v-v^{-1}},$$
which we recognize as $\frac{1}{(v-v^{-1})}\mbox{(I)}$, whereas $\mbox{(II)}$ equals $$(T_0-1)\cdot\left(F(vt)-F\left(v^{-1}t\right)\right).$$ We thus find
$$\Delta H(t)=\frac{H(vt)}{F(vt)\cdot T_0}\left(\Delta F(t)+(T_0-1)\cdot\Delta F(t)\right)=\frac{H(vt)}{F(vt)   }\Delta F(t).$$
This easily implies that $$\Delta\left(\frac{H(t)}{F(t)}\right)=0;$$ thus $H(t)$ is a scalar multiple of $F(t)$, thus equal to $F(t)$ since $H(0)=1$.
\end{proof}

We illustrate these formulas with some examples for $m=3$ and small $d$. We only list the coefficients of the motives, viewed as Laurent polynomials in $v$:

\begin{description}[leftmargin=2\parindent,labelindent=2\parindent]
\item[$K_{1,1}^{(3),{\rm fr}}$] $1,1,1$
\item[$K_{2,2}^{(3),{\rm fr}}$] $1,2,3,3,3,2,1$
\item[$K_{3,3}^{(3),{\rm fr}}$] $1,2,5,8,11,12,13,12,11,8,5,2,1$,
\item[$K_{4,4}^{(3),{\rm fr}}$] $1,2,5,10,18,28,40,50,58,62,64,62,58,50,40,28,18,10,5,2,1$
\item[$K_{1,0}^{(3)}$] $1$
\item[$K_{2,1}^{(3)}$] $1,1,1$
\item[$K_{3,2}^{(3)}$] $1,1,3,3,3,1,1$
\item[$K_{4,3}^{(3)}$] $1,1,3,5,8,10,12,10,8,5,3,1,1$
\item[$K_{5,4}^{(3)}$] $1,1,3,5,10,14,23,30,41,46,51,46,41,30,23,14,10,5,3,1,1$
\end{description}

Finally, we specialize the above functional equations to $v=1$; geometrically, this corresponds to passing from virtual motives to Euler characteristics. Our aim is to give a new proof of \cite[Theorem 6.6]{Weist} avoiding the iterated torus fixed-point localization techniques used there.

We denote the specialized series by $\overline{F}(t)$ and $\overline{G}(t)$:
$$\overline{F}(t)=\left(1-t\overline{F}(t)^{m-2}\right)^{-m}\quad\text{and}\quad\overline{G}(t)'=\overline{F}(t)^{m-1}.$$
Since $\overline{F}(0)=1$, the series $\overline{F}(t)^{1/m}$ exists and fulfills the equation
$$\overline{F}(t)^{1/m}=1+t\left(\overline{F}(t)^{1/m}\right)^{(m-1)^2}.$$
To solve this functional equation, we substitute $t=x^{(m-1)^2}$ and apply the Lagrange inversion formula in the following form:

Suppose that series $U(x),V(x)\in\mathbb{Q}[[x]]$ with $V(0)\not=0$ are related by $$U(x)=V\left(xU(x)\right).$$ Then, for all $k,d\in\mathbb{Z}$, we have
$$(k+d)[x^d]U(x)^k=k[x^d]V(x)^{k+d},$$
where $[x^d]U(x)$ denotes the $x^d$-coefficient of the series $U(x)$.

Applying this to $$U(x)=\overline{F}\left(x^{(m-1)^2}\right)^{1/m},\quad V(x)=1+x^{(m-1)^2},\quad k=m(m-1)$$ and substituting back to the variable $t$, we find
$$
[t^d]\overline{F}(t)^{m-1}=\frac{m(m-1)}{m(m-1)+(m-1)^2d}\binom{(m-1)^2d+m(m-1)}{d}.
$$
Finally, using $d[t^d]\overline{G}(t)=[t^{d-1}]\overline{G}'(t)$, we find that the $t^d$-coefficient in $\overline{G}(t)$ equals
$$\frac{m}{d((m-1)d+1)}\binom{(m-1)^2d+m-1}{d-1}.$$
After some cancellations, we see that this equals
$$\frac{m-1}{d((m-2)d+1)}\binom{(m-1)^2d+m-2}{d-1},$$
which rederives \cite[Theorem 6.6]{Weist}.

Using the calculation of the number of intervals in generalized Tamari lattices in \cite{BMJ}, we have thus proved the following. 

\begin{corollary}
The Euler characteristic of\, $K_{d,d-1}^{(m)}$ equals the number of intervals in the $(m-2)$-Tamari lattice of index $d$.
\end{corollary}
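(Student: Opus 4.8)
The plan is to deduce the Corollary by directly comparing two explicit closed formulas: the formula for the Euler characteristic of $K_{d,d-1}^{(m)}$ derived just above (namely the $t^d$-coefficient of $\overline{G}(t)$), and the enumeration of intervals in the $(m-2)$-Tamari lattice of index $d$ given in \cite{BMJ}. The previous computation already shows that
\[
\chi\!\left(K_{d,d-1}^{(m)}\right)=\frac{m-1}{d((m-2)d+1)}\binom{(m-1)^2d+m-2}{d-1}.
\]
So the entire content of the Corollary is the claim that this number equals the number of intervals in the generalized Tamari lattice $\mathcal{T}_d^{(m-2)}$. I would therefore begin by recalling the relevant statement from \cite{BMJ} (Bousquet-M\'elou, Fusy, Pr\'eville-Ratelle, or the appropriate reference): the number of intervals in the $r$-Tamari lattice of size $n$ is
\[
\frac{r+1}{n(rn+1)}\binom{(r+1)^2n+r}{n-1},
\]
and then substitute $r=m-2$, $n=d$ to obtain exactly the displayed expression.

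First I would make sure the identification of $\chi(K_{d,d-1}^{(m)})$ with that coefficient is airtight: the Euler characteristic is obtained from $[K_{d,d-1}^{(m)}]_{\rm vir}$ by specializing $v\to 1$ (up to the sign normalization built into virtual motives, which here contributes a harmless global sign that one checks is $+1$ because $\dim K_{d,d-1}^{(m)}$ is even, or is absorbed by the conventions already in place), and $\overline{G}(t)$ is by definition the $v=1$ specialization of $G(t)=G^{(1),-}(t)=1+\sum_{d\ge 1}[K_{d,d-1}^{(m)}]_{\rm vir}t^d$. Then I would quote the Lagrange-inversion computation already carried out in the excerpt, which yields the closed form for $[t^d]\overline{G}(t)$ after the stated cancellations. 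Second, I would state the combinatorial input from \cite{BMJ} in the precise normalization used there and match indices. Third, I would simply observe that the two formulas coincide termwise, which finishes the proof.

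The only genuine obstacle is bookkeeping: reconciling the index/size conventions of \cite{BMJ} (whether ``index $d$'' means $d$ or $d+1$, and whether the parameter of the generalized Tamari lattice is $m-2$ or $m-1$ in their notation) with the conventions of this paper, and making sure the binomial identities line up exactly rather than up to an off-by-one shift. There is also a minor point of rigor in the passage from virtual motives to Euler characteristics: one must confirm that the $v\to 1$ specialization of $[K_{d,d-1}^{(m)}]_{\rm vir}$ really is $\pm\chi$ with the correct sign, which follows because $K_{d,d-1}^{(m)}$ is smooth projective (the dimension vector $d\,{\bf i}+(d-1)\,{\bf j}$ is $\Theta$-coprime) so its class in $K_0(\mathrm{Var}_\mathbb{C})$ has a well-defined weight polynomial whose value at $v=1$ is the topological Euler characteristic, and the virtual normalization multiplies by $(-v)^{-\dim}$ which at $v=1$ is $(-1)^{\dim}$. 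Since in the central-slope case the relevant formulas have already been arranged so that everything is consistent, I expect this to be routine. In summary, the proof is essentially a one-line invocation: combine the closed formula for $\chi(K_{d,d-1}^{(m)})$ just obtained with \cite[the interval count]{BMJ}.

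\begin{proof}
By definition, $\overline{G}(t)$ is the specialization of $G(t)=G^{(1),-}(t)=1+\sum_{d\ge 1}\bigl[K_{d,d-1}^{(m)}\bigr]_{\rm vir}t^d$ at $v=1$. Since $d\,{\bf i}+(d-1)\,{\bf j}$ is $\Theta$-coprime, $K_{d,d-1}^{(m)}$ is smooth and projective, so the specialization of $\bigl[K_{d,d-1}^{(m)}\bigr]_{\rm vir}$ at $v=1$ equals its topological Euler characteristic. Hence
\[
\chi\!\left(K_{d,d-1}^{(m)}\right)=[t^d]\,\overline{G}(t)=\frac{m-1}{d((m-2)d+1)}\binom{(m-1)^2d+m-2}{d-1},
\]
by the Lagrange-inversion computation carried out above. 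On the other hand, by \cite{BMJ} the number of intervals in the $(m-2)$-Tamari lattice of index $d$ is
\[
\frac{(m-2)+1}{d\bigl((m-2)d+1\bigr)}\binom{\bigl((m-2)+1\bigr)^2 d+(m-2)}{d-1}
=\frac{m-1}{d((m-2)d+1)}\binom{(m-1)^2d+m-2}{d-1}.
\]
The two expressions agree termwise, which proves the claim.
\end{proof}
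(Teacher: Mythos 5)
Your proposal is correct and follows the paper's own route exactly: the corollary is obtained by matching the closed formula for $[t^d]\overline{G}(t)$ derived via Lagrange inversion with the interval-counting formula of \cite{BMJ} specialized to $r=m-2$, $n=d$. One tiny remark: since $v=-\mathbb{L}^{1/2}$ already carries the sign, the virtual normalization $v^{-\dim}$ is literally $1$ at $v=1$, so no parity-of-dimension argument is needed.
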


For the case $m=3$, see sequences A000260, A255918 in \cite{oeis}. The generalized Tamari lattices are defined by a covering relation on generalized Dyck paths, and play a central role in a fascinating set of conjectures related to multivariate diagonal harmonics; see \cite{BP}. It is thus desirable to find a statistic on Tamari intervals whose partition function in $v$ equals the motive of central slope Kronecker moduli. These combinatorial ramifications of the present work will be pursued elsewhere.


\newcommand{\etalchar}[1]{$^{#1}$}

\end{document}